\newtheorem{theorem}{Theorem}
\newtheorem{proposition}[theorem]{Proposition}
\theoremstyle{definition}
\newtheorem{defn}{Definition}
\theoremstyle{remark}
\newtheorem{remark}{Remark}[theorem]
\def\R{{\mathbb R}}
\def\N{{\mathbb N}}
\newcommand{\xR}{{]}{-\infty},+\infty]}
\newcommand{\Rex}{\xR}
\newcommand{\Rb}{\overline{\R}}
\newcommand{\Fcal}{\mathcal{F}}
\newcommand{\ps}{\smallbreak}
\newcommand{\PT}{$\bullet$\ }
\newcommand{\lsc}{lsc}
\newcommand{\del}{\partial}
\newcommand{\delc}{\widehat{\del}}
\newcommand{\dom} {{\rm dom} \kern.15em}
\newcommand{\tq}{:}
\newcommand{\la}{\langle}
\newcommand{\ra}{\rangle}
\newcommand{\eps}{\varepsilon}
\newcommand{\bx}{\bar{x}}
\newcommand{\xb}{\bar{x}}
\newcommand{\yb}{\bar{y}}
\newcommand{\tow}{{\stackrel{w^*}{\longrightarrow}}\;}
\begin{document}
\thispagestyle{empty}
\begin{center}
{\large\bf\sc Upper semismooth functions and the subdifferential determination
property}
\medskip\\
\today
\end{center}

\begin{center}
  {\small\begin{tabular}{c}
  Marc Lassonde\\
   Universit\'e des Antilles, BP 150, 97159 Pointe \`a Pitre, France; and\\
   LIMOS, Universit\'e Blaise Pascal, 63000 Clermont-Ferrand, France\\
  E-mail: marc.lassonde@univ-ag.fr
  \end{tabular}}
\end{center}

\begin{center}
  {\small\it Dedicated to the memory of Jon Borwein.}
\end{center}

\medbreak\noindent
\textbf{Abstract.}
In this paper, an upper semismooth function is defined to be
a lower semicontinuous function
whose radial subderivative satisfies a mild directional upper
semicontinuity property.
Examples of upper semismooth functions are the proper lower semicontinuous
convex functions,
the lower-C$^1$ functions, the regular directionally Lipschitz functions,
the Mifflin semismooth functions,
the Thibault-Zagrodny directionally stable functions.
It is shown that the radial subderivative of such functions
can be recovered from any subdifferential of the function.
It is also shown that these functions are subdifferentially determined,
in the sense that if two functions have the same subdifferential
and one of the functions is upper semismooth, then the two functions
are equal up to an additive constant.
\medbreak\noindent
\textbf{Keywords:}
  upper semismooth, Dini subderivative,
  radial subderivative, subdifferential, subdifferential determination property,
  approximately convex function, regular function.
  
\medbreak\noindent
\textbf{2010 Mathematics  Subject Classification:}
  49J52, 49K27, 26D10, 26B25.
\section{Introduction}\label{intro}
Jon Borwein discussing generalisations in the area of nonsmooth optimisation
\cite[p.\ 4]{Bor16} writes:
{\small
\begin{quote}
In his thesis Francis Clarke extended Moreau-Rockafellar max formula
to all locally Lipschitz functions. Clarke replaced
the right Dini directional derivative $D_hf(x)$ by
$$
D^c_hf(x) = \limsup_{0<t\to 0,y\to x}
\frac{f(y+th)-f(y)}{t}.
$$
Somewhat miraculously the mapping $p$ sending $h \to D^c_hf(x)$
is always continuous and sublinear in $h$, and so if we define
$\del^Cf(x)=\del p(0)=\{y\in X^*\tq \la y, h\ra\le D^c_hf(x), \forall h\in X\}$, Moreau-Rockafellar max formula leads directly to:
\medbreak\noindent
\textbf{Theorem} (Clarke).
{\it Let $f: E\to\R$ be a locally Lipschitz function. Then
\begin{equation}\label{intro1}
D^c_hf(x)=\sup_{y\in \del^Cf(x)}\la y, h\ra
\end{equation}
for all $h\in E$. [...]
\if{In particular $\del^Cf(x)$ is nonempty. Moreover, $\del^Cf(x)$
is a singleton if and only if $f$ is strictly differentiable at $x$.}\fi}
\medbreak\noindent
In truth Clarke, appealing to Rademacher’s theorem, originally defined $\del^Cf(x)$
as the closed convex hull of limits of nearby points of differentiability.
This makes (\ref{intro1}) seem even more remarkable.
There is, however, a dark side to the situation \cite[Cor.\ 9]{BMW01}.
Recall that a set in a Banach space is generic if it contains intersection of countably many dense open sets. The complement is thus very
small topologically.

\medbreak\noindent
\textbf{Theorem} (Generic triviality \cite{BMW01}).
{\it Let $A$ be an open subset
of a Banach space $X$. Then the
set of non-expansive functions on $A$ with $\del^Cf(x)\equiv B_{X^*}$
for all $x$ in $A$ is generic in the uniform norm on $A$.}
\medbreak\noindent
In other words, in the sense of Baire category the Clarke subdifferential
(likewise the limiting subdifferential in the separable case) of almost
all functions contains no information at any point
of $A$. [...]
\if{Of course none of these functions are convex since then the function is generically strictly Fr\'echet differentiable.
This is the cost of abstraction --- if a construction always works for a
very broad class it usually works only passingly well.}\fi
So, for most Lipschitz functions the Clarke calculus is vacuous.
That is why serious researchers
work with well structured subclasses such \textit{semi-algebraic, partially smooth}
or \textit{essentially smooth} functions.
\end{quote}
}

The fact that subdifferentials cannot discriminate between functions
is a serious drawback according to
Terry Rockafellar \cite{Roc82} who argues:

{\small
\begin{quote}
In subgradient optimization, interest centers on methods for
minimizing $f$ that are based on being able to generate for each $x$
at least one (but not necessarily every) $y\in \del^Cf(x)$, or perhaps just
an approximation of such a vector $y$. One of the main hopes is
that by generating a number of subgradients at various points in
some neighborhood of $x$, the behavior of $f$ around $x$ can roughly be
assessed. In the case of a convex function $f$ this is not just
wishful thinking, and a number of algorithms, especially those of
bundle type (e.g., Lemarechal 1975 and Wolfe 1975) rely on such an
approach. In the nonconvex case, however, there is the possibility,
without further assumptions on $f$ than local Lipschitz continuity,
that the multifunction $\del^Cf:  x\mapsto \del^Cf(x)$
may be rather bizarrely disassociated from $f$.
An example given at the end of this section has
$f$ locally Lipschitzian, yet such that there exist many other locally
Lipschitzian functions $g$, not merely differing from $f$ by an additive
constant, for which $\del^Cg(x) = \del^Cf(x)$ for all $x$.
Subgradients alone cannot discriminate between the properties of
these different functions
and therefore cannot be effective in determining their local minima.
\if{
[...] The key seems to lie in postulating the existence of the ordinary
directional derivatives and some sort of relationship between them and $\del^Cf$.
Mifflin (1977a and 1977b), most notably has worked in this direction.}\fi
\end{quote}
}

In this paper, we consider 
lower semicontinuous functions and arbitrary subdifferentials.
We identify a large subclass of lower semicontinuous functions
whose radial subderivative at a given point of their domain
can be fully expressed in terms of the
subdifferential of the function at neighbouring points.
We show that these functions are precisely the functions
whose radial subderivative satisfies a mild directional upper
semicontinuity property, independently from any subdifferential. Such functions are said to be
upper semismooth. This class includes the proper lower semicontinuous
(directionally, approximately) convex functions,
the regular directionally Lipschitz functions,
the Mifflin semismooth functions,
the Thibault-Zagrodny directionally stable functions.

We show that, as expected, the class of upper semismooth functions
satisfies the subdifferential determination property,
that is, if two functions have the same subdifferential
and one of the functions is upper semismooth, then the two functions
are equal up to an additive constant.
\if{A class of functions $\Fcal$ is said to have the
\textit{subdifferential determination property on an
open subset $\Omega \subset X$} if for any
two lsc functions $f, g:X\to \Rex$ with $g$ in the class $\Fcal$,
the following holds:
\begin{equation}\label{suddiffdet}
\del f(x)=\del g(x) \text{ for all } x\in \Omega
\Longrightarrow f=g + Const  \text{ on  }\Omega.
\end{equation}
}\fi
J.-J. Moreau \cite{Mor65} was the first to consider this property
for the class of proper lower semicontinuous convex functions defined on
Hilbert spaces. His result was later extended by Rockafellar \cite{Roc70}
to the same class of functions defined
on general Banach spaces. Since then
this property has been the object of intensive research and
various classes of non convex functions have been considered in this context.
We refer to the papers by L. Thibault and D. Zagrodny \cite{TZ05,TZ10}
for a detailed account of the history of this property.

The technique we use to prove the subdifferential
determination property is simple: from the subdifferential assumption on
the two functions, one of them being upper semismooth,
we derive an inequality between the radial subderivatives of
the functions; we then conclude by invoking a mean value theorem with
Dini subderivatives. The structure of the paper is as follows.
In Section \ref{Dinisect}, we revisit the mean value theorems with Dini
subderivatives. In Section \ref{subsubsect}, we discuss subderivatives
and subdifferentials and recall the duality formula linking them.
In Section \ref{subsubsect}, we define the class of upper semismooth functions
and give the main examples of functions in this class.
In Section \ref{determinationsect}, we prove our main theorems on the 
subdifferential determination property.
\section{Mean value theorems with Dini subderivatives}\label{Dinisect}
The lower right-hand Dini derivative,
or lower radial subderivative from the direction $d=+1$,
of a function $\psi:\R\to\xR$
at a point $t_0\in\R$ where $\psi$ is finite is denoted by
$$
\psi^r(t_0;+1):=\liminf_{t\searrow 0}\frac{\psi(t_0+t)-\psi(t_0)}{t}.
$$
Its upper version is denoted by
$$
\psi^r_+(t_0;+1):=\limsup_{t\searrow 0}\frac{\psi(t_0+t)-\psi(t_0)}{t}.
$$
\begin{proposition}[Mean value inequality]\label{mvi}
Let $\psi:[0,1]\to\xR$ be lower semicontinuous on $[0,1]$ and finite at $0$.
Then, for every real number $\lambda\le \psi(1)-\psi(0)$, there
exists $t_0\in [0,1{[}$ such that $\psi(t_0)\le \psi(0)+t_0\lambda$ and
$\lambda\le \psi^r(t_0;+1).$
\end{proposition}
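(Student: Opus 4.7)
\medbreak\noindent\textbf{Proof plan.}
The plan is to reduce the statement to minimising an auxiliary lower semicontinuous function. I would introduce
\[
\varphi(t):=\psi(t)-\psi(0)-\lambda t,\qquad t\in[0,1],
\]
which is lsc on $[0,1]$ with $\varphi(0)=0$ and $\varphi(1)=\psi(1)-\psi(0)-\lambda\ge 0$. Rewriting the two desired conclusions in terms of $\varphi$, it is enough to exhibit $t_0\in[0,1[$ satisfying $\varphi(t_0)\le 0$ together with $\varphi^r(t_0;+1)\ge 0$.

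Next I would show that $\varphi$ attains its minimum on $[0,1]$ at some point of $[0,1[$. Since $\psi$ takes values in $\xR$, the function $\varphi$ is pointwise $>-\infty$; being lsc on the compact interval $[0,1]$, it is bounded below and attains its infimum, which is moreover at most $\varphi(0)=0$. If every minimiser were equal to $1$, one would have $\varphi(1)<\varphi(0)=0$, contradicting $\varphi(1)\ge 0$; therefore some minimiser $t_0$ lies in $[0,1[$.

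Finally I would check the two required properties at this $t_0$. The inequality $\varphi(t_0)\le\varphi(0)=0$ follows at once from minimality, and it rewrites as $\psi(t_0)\le\psi(0)+t_0\lambda$. Since $t_0<1$, the difference quotient $(\varphi(t_0+t)-\varphi(t_0))/t$ is non-negative for every sufficiently small $t>0$, whence $\varphi^r(t_0;+1)\ge 0$ and consequently $\psi^r(t_0;+1)\ge\lambda$. There is no serious obstacle; the only subtle point is the endpoint analysis excluding $t_0=1$, which is precisely where the assumption $\lambda\le\psi(1)-\psi(0)$ is used.
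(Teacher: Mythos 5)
Your proposal is correct and follows essentially the same route as the paper: the paper minimises $g(t)=\psi(t)-t\lambda$ (your $\varphi$ up to the constant $\psi(0)$) over the compact interval, uses $g(0)\le g(1)$ to get a minimiser $t_0\ne 1$, and reads off both conclusions from minimality exactly as you do. No gaps.
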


\begin{proof}
For completeness, we recall the elementary argument,
as given, e.g., in \cite[Lemma 4.1]{JL13} or in \cite[Lemma 3.1]{JL14}.
For $t\in [0,1]$, let $g(t):=\psi(t)-t \lambda$.
The function $g:[0,1]\to \xR$ is lsc
on the compact $[0,1]$ and $g(0)=\psi(0)\le \psi(1)-\lambda=g(1)$.
Hence $g$ attains its
minimum on $[0,1]$ at a point $t_0\ne 1$.
Consequently, $\psi(t_0)-t_0\lambda=g(t_0)\le g(0)=\psi(0)$ and
since $g(t_0+t)\ge g(t_0)$ for every $t\in {]}0,1-t_0]$,
it follows that
\[
\forall t\in {]}0,1-t_0],\quad \frac{\psi(t_0+t)-\psi(t_0)}{t} \ge \lambda.
\]
Passing to the limit inferior as $t\searrow 0$, we get
$\psi^r(t_0;+1)\ge \lambda$ as claimed.
\end{proof}

\begin{proposition}[Mean value theorem: semicontinuous version]
\label{recov-subdiv-basic}
Let $\varphi:[0,1]\to\xR$ be lower semicontinuous on $[0,1]$ and finite at $0$
and let
$\gamma:[0,1]\to [-\infty,+\infty{[}$ be upper semicontinuous on $[0,1]$
and finite at $0$.
Assume that for every $t\in [0,1[$,
there exists a real number $\rho(t)$ such that
$\varphi^r(t;+1)\le \rho(t)\le \gamma^r(t;+1)$.
Then
\begin{equation*}
\varphi(1)-\varphi(0)\le \gamma(1)-\gamma(0).
\end{equation*}
\end{proposition}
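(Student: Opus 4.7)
The plan is to reduce this statement to the Mean Value Inequality (Proposition~\ref{mvi}) by applying it to the auxiliary function $\psi:=\varphi-\gamma$, and then argue by contradiction.

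First, I would verify that $\psi:[0,1]\to\xR$ is well defined, lower semicontinuous on $[0,1]$, and finite at $0$. Since $\gamma$ has values in $[-\infty,+\infty{[}$ and $\varphi$ in $\xR$, the pointwise difference $\varphi-\gamma$ is unambiguous and has values in $\xR$; being the sum of the lsc functions $\varphi$ and $-\gamma$ (the latter lsc because $\gamma$ is usc), it is lsc, and it is clearly finite at $0$.

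Next, supposing for contradiction that $\varphi(1)-\varphi(0)>\gamma(1)-\gamma(0)$, I would rewrite this as $\psi(1)-\psi(0)>0$ (noting that $\psi(0)$ is finite while $\psi(1)-\psi(0)\in {]}0,+\infty]$), pick any real $\lambda$ with $0<\lambda\le\psi(1)-\psi(0)$, and invoke Proposition~\ref{mvi} to obtain $t_0\in[0,1{[}$ satisfying $\psi^r(t_0;+1)\ge\lambda>0$.

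The heart of the argument is then to show that $\psi^r(t_0;+1)\le 0$, producing the contradiction. The hypothesis implicitly forces $\varphi(t_0)$ and $\gamma(t_0)$ to be finite (otherwise the radial subderivatives appearing in it are meaningless), and it supplies $\varphi^r(t_0;+1)\le\rho(t_0)\le\gamma^r(t_0;+1)$ with $\rho(t_0)\in\R$. Writing, for small $t>0$,
$$
\frac{\psi(t_0+t)-\psi(t_0)}{t}=\frac{\varphi(t_0+t)-\varphi(t_0)}{t}-\frac{\gamma(t_0+t)-\gamma(t_0)}{t},
$$
and applying the elementary rule $\liminf(a_t-b_t)\le \liminf a_t-\liminf b_t$ (legitimate here because $\liminf a_t\le\rho(t_0)<+\infty$ and $\liminf b_t\ge\rho(t_0)>-\infty$, so no indeterminate form arises), I obtain
$$
\psi^r(t_0;+1)\le\varphi^r(t_0;+1)-\gamma^r(t_0;+1)\le\rho(t_0)-\rho(t_0)=0,
$$
contradicting $\psi^r(t_0;+1)\ge\lambda>0$.

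The main subtlety—minor but essential—is ensuring that the subtraction $\varphi^r(t;+1)-\gamma^r(t;+1)$ is unambiguously defined and nonpositive at the selected point $t_0$; this is exactly what the intermediate real number $\rho(t)$ in the hypothesis guarantees, both by squeezing each subderivative on the finite side and by witnessing the desired inequality between them.
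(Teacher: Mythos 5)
Your proof is correct and follows essentially the same route as the paper: reduce to Proposition~\ref{mvi} via $\psi:=\varphi-\gamma$ and use the intermediate real $\rho(t_0)$ to force $\psi^r(t_0;+1)\le 0$. The only differences are cosmetic — you argue by contradiction and cite the subtraction rule $\liminf(a_t-b_t)\le\liminf a_t-\liminf b_t$ (correctly checking that $\varphi^r(t_0;+1)\le\rho(t_0)<+\infty$ and $\gamma^r(t_0;+1)\ge\rho(t_0)>-\infty$ rule out indeterminate forms), whereas the paper proves that same estimate inline with an explicit $\eps$-argument and concludes directly that every real $r\le\psi(1)-\psi(0)$ is $\le 0$.
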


\begin{proof}
Let $\psi:=\varphi-\gamma$. Then $\psi:[0,1]\to\xR$ is lower semicontinuous
on $[0,1]$ and finite at $0$.
So, according to Proposition \ref{mvi}, for every real number
$r\le \psi(1)-\psi(0)$, there
exists $t_0\in [0,1[$ such that $r\le \psi^r(t_0;+1)$.
There exists a real number $\rho(t_0)$ such that
\begin{equation}\label{recov-subdiv11}
\varphi^r(t_0;+1)\le \rho(t_0)\le \gamma^r(t_0;+1).
\end{equation}
Since
\begin{align*}
r\le \psi^r(t_0;+1)=
\liminf_{t\searrow 0}\left(\frac{\varphi(t_0+t)-\varphi(t_0)}{t}
                                 -\frac{\gamma(t_0+t)-\gamma(t_0)}{t}\right)
\end{align*}
and since, by (\ref{recov-subdiv11}),
\begin{align*}
\rho(t_0)\le  \liminf_{t\searrow 0}\frac{\gamma(t_0+t)-\gamma(t_0)}{t},
\end{align*}
for every $\eps>0$ one can find $t_\eps>0$ such that for all $t\in ]0,t_\eps]$
\begin{align*}
r-\eps< \frac{\varphi(t_0+t)-\varphi(t_0)}{t}
                                 -\frac{\gamma(t_0+t)-\gamma(t_0)}{t}
\quad\text{ and }\quad
\rho(t_0)-\eps\le \frac{\gamma(t_0+t)-\gamma(t_0)}{t},
\end{align*}
hence, for all $t\in {]}0,t_\eps]$,
\begin{align*}
r+\rho(t_0)-2\eps< \frac{\varphi(t_0+t)-\varphi(t_0)}{t}.
\end{align*}
Passing to the limit inferior, we get
\begin{align*}
r+\rho(t_0)-2\eps\le \varphi^r(t_0;+1).
\end{align*}
Since $\varphi^r(t_0;+1)\le \rho(t_0)$ by (\ref{recov-subdiv11}),
it follows that $r-2\eps\le 0$.
As $\eps>0$ was arbitrary, we conclude that $r\le 0$.
Therefore, for every real number $r\le \psi(1)-\psi(0)$ one has $r\le 0$,
proving that
$\psi(1)-\psi(0)\le 0$. This amounts to
$\varphi(1)-\varphi(0)\le \gamma(1)-\gamma(0)$, as claimed.
\end{proof}

\begin{remark}
(a) In Proposition \ref{recov-subdiv-basic}, the semicontinuity conditions
on $\varphi$ and $\gamma$ cannot be relaxed. Indeed, consider
the functions $\varphi,\gamma:[0,1]\to\R$
defined by
$$
\varphi(t):= 0 \quad\mbox{for all } t\in [0,1]
\quad \mbox{and}\quad 
\gamma(t):=\left\{
\begin{array}{ll}
1 & \mbox{if~~} t\in {[}0,1/2{[}\\
0 & \mbox{if~~} t\in {[}1/2,1{]}
\end{array}
\right..
$$
Then, $\varphi:[0,1]\to\R$ is continuous,
$\gamma:[0,1]\to\R$ is continuous except at point $1/2$ where it is
merely right-continuous, and
$\varphi^r(t;+1)= \gamma^r(t;+1)=0$ for all $t\in[0,1[$.
Yet the conclusion in Proposition \ref{recov-subdiv-basic} is false:
$\varphi(1)-\varphi(0)=0> \gamma(1)-\gamma(0)=-1$.

\smallbreak
(b) In Proposition \ref{recov-subdiv-basic}, the finiteness conditions
$\varphi^r(t;+1)<+\infty$ and $\gamma^r(t;+1)>-\infty$ for every $t\in [0,1[$, 
cannot be relaxed.
Indeed, consider the functions $\varphi,\gamma:[0,1]\to\R$
defined by
$$
\varphi(t):=
\left\{
\begin{array}{ll}
2 & \mbox{if~~} t\in {]}0,1]\\
0 & \mbox{if~~} t= 0
\end{array}
\right.
\quad \mbox{and}\quad \gamma(t)=\sqrt{t}\quad\mbox{for all } t\in [0,1].
$$
Then, $\varphi:[0,1]\to\xR$ is lower semicontinuous on $[0,1]$ with finite values,
$\gamma:[0,1]\to\R$ is continuous and
$\varphi^r(t;+1)\le \gamma^r(t;+1)$ for every $t\in [0,1[$.
Moreover, $\varphi^r(t;+1)$ and $\gamma^r(t;+1)$ are finite for every $t\in ]0,1[$,
but $\varphi^r(0;+1)= \gamma^r(0;+1)=+\infty$, and indeed
the conclusion of Proposition \ref{recov-subdiv-basic} is false: 
$\varphi(1)-\varphi(0)=2> \gamma(1)-\gamma(0)=1$.
\end{remark}

\begin{proposition}[Mean value theorem: continuous version]
\label{recov-subdiv-basic-continuous}
Let $\varphi,\gamma:[0,1]\to\R$ be continuous.
Assume there is a countable subset $C\subset [0,1]$ such
that for every $t\in [0,1]\setminus C$,
there exists a real number $\rho(t)$ such that
$\varphi^r_+(t;+1)\le \rho(t)\le \gamma^r(t;+1)$.
Then
\begin{equation*}
\varphi(1)-\varphi(0)\le \gamma(1)-\gamma(0).
\end{equation*}
\end{proposition}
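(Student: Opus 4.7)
The plan is to assume for contradiction that $\varphi(1)-\varphi(0)>\gamma(1)-\gamma(0)$, fix $\eps>0$ small (say with $\varphi(1)-\varphi(0)>\gamma(1)-\gamma(0)+3\eps$), and then run a closed-set argument on a perturbation of $\gamma-\varphi$ by a summable jump function concentrated on $C$. Enumerate $C=\{c_n\}_{n\ge 1}$ and introduce the left-continuous, non-decreasing step function
$$\sigma(t):=\sum_{n:\,c_n<t}2^{-n}\in[0,1],$$
which is lsc, continuous outside $C$, and satisfies $\sigma(c_m+s)-\sigma(c_m)\ge 2^{-m}$ for every $s>0$ and every $c_m\in C$. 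Define
$$\phi(t):=\bigl(\gamma(t)-\gamma(0)\bigr)-\bigl(\varphi(t)-\varphi(0)\bigr)+\eps\,t+\eps\,\sigma(t),$$
so that $\phi$ is lsc and left-continuous on $[0,1]$, $\phi(0)=0$, and $\phi(1)\le -3\eps+\eps+\eps\sigma(1)\le -\eps<0$.

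Set $\tau:=\sup\{t\in[0,1]:\phi\ge 0\ \text{on}\ [0,t]\}$. The left-continuity of $\phi$ promotes $\phi\ge 0$ on $[0,\tau)$ to $\phi(\tau)\ge 0$, so $\phi\ge 0$ on $[0,\tau]$; since $\phi(1)<0$ one has $\tau<1$; and by the definition of $\tau$, every right-neighbourhood of $\tau$ contains some $s$ with $\phi(s)<0$. The contradiction will come by showing that, on the contrary, $\phi(\tau+s)>0$ for all small $s>0$, so that $[0,\tau+s]\subset\{\phi\ge 0\}$ and $\tau$ is not the supremum.

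This splits into two cases at $\tau$. If $\tau\notin C$, the hypothesis gives $\varphi^r_+(\tau;+1)\le\rho(\tau)\le\gamma^r(\tau;+1)$; the elementary bound $\liminf(A-B)\ge\liminf A-\limsup B$ applied to the quotient defining $\phi^r(\tau;+1)$, together with $\sigma^r(\tau;+1)\ge 0$, yields $\phi^r(\tau;+1)\ge\gamma^r(\tau;+1)-\varphi^r_+(\tau;+1)+\eps\ge\eps>0$, whence $\phi(\tau+s)\ge\phi(\tau)+(\eps/2)s>0$ for small $s$. If $\tau=c_m\in C$, continuity of $\varphi,\gamma$ at $c_m$ provides $\delta>0$ with $|\varphi(c_m+s)-\varphi(c_m)|,|\gamma(c_m+s)-\gamma(c_m)|<\eps\,2^{-m}/4$ for $s\in(0,\delta]$; combined with $\sigma(c_m+s)-\sigma(c_m)\ge 2^{-m}$, this gives $\phi(c_m+s)-\phi(c_m)\ge\eps\,2^{-m}/2>0$, so $\phi(c_m+s)>\phi(c_m)\ge 0$ on $(0,\delta]$. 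Either way we contradict the definition of $\tau$; letting $\eps\downarrow 0$ completes the proof. The delicate step is the second case, where no Dini-derivative hypothesis is available: the jump $2^{-m}$ of $\sigma$ at each $c_m$ is calibrated precisely so that its contribution $\eps\,2^{-m}$ to $\phi$ dominates the small continuous oscillations of $\varphi$ and $\gamma$ near $c_m$.
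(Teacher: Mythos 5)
Your proof is correct and follows essentially the same route as the paper's: the perturbation $\eps t+\eps\sum_{c_n<t}2^{-n}$, the supremum $\tau$ of the set where the perturbed inequality holds (your $\{\phi\ge 0\}$ set is exactly the paper's set $A$), and the two-case analysis at $\tau\notin C$ (via the Dini-derivative hypothesis) versus $\tau\in C$ (via continuity plus the calibrated jump $2^{-m}$) reproduce the paper's argument, merely repackaged as a contradiction through the single function $\phi$ and its lower Dini derivative. Only cosmetic remark: once the contradiction is obtained for one suitable $\eps$, the final ``letting $\eps\downarrow 0$'' is superfluous.
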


\begin{proof}
We follow the pattern of the proof of \cite[(8.5.1)]{Die69}.
For any $\eps>0$, we will show that
$$
\varphi(1)-\varphi(0)\le \gamma(1)-\gamma(0) + 3\eps;
$$
the left hand side being independent of $\eps$, this will complete the proof.
Let $C:=\{c_n\tq n\in \N\}$ be the given countable subset of $[0,1]$.
Consider the set
$$
A:=\{ t\in [0,1]\tq \forall t'\in [0,t],\ 
\varphi(t')-\varphi(0)\le \gamma(t')-\gamma(0) + \eps t'+\eps \sum_{c_n<t'} 2^{-n}\}.
$$
It is clear that $0\in A$ and that if $t\in A$, then $[0,t]\subset A$.
Let $t_0=\sup A$. From the continuity of $\varphi$ and $\gamma$ it follows that
$t_0\in A$, so $[0,t_0]=A$. Therefore we need only prove that $t_0=1$.
\smallbreak
Suppose $t_0<1$.
If $t_0\not\in C$, there is a real number $\rho(t_0)$ such that
$$\varphi^r_+(t_0;+1)\le \rho(t_0)\le \gamma^r(t_0;+1),$$
so, by definition of the subderivatives, we can find $\eta>0$ such that
for every $s\in {]}0,\eta]$,
\begin{gather*}
\varphi(t_0+s)-\varphi(t_0)\le (\rho(t_0)+\eps/2)s\quad\text{and}\quad
(\rho(t_0)-\eps/2)s\le \gamma(t_0+s)-\gamma(t_0),
\end{gather*}
hence
$$
\varphi(t_0+s)-\varphi(t_0)\le \gamma(t_0+s)-\gamma(t_0)+\eps s,
$$
and since $t_0\in A$, we deduce
\begin{align*}
\varphi(t_0+s)-\varphi(0) &\le \gamma(t_0+s)-\gamma(0) +
\eps (t_0+s)+\eps \sum_{c_n<t_0} 2^{-n}\\
&\le \gamma(t_0+s)-\gamma(0) +
\eps (t_0+s)+\eps \sum_{c_n<t_0+s} 2^{-n},
\end{align*}
hence $t_0+\eta\in A$ contrary to the definition of $t_0$.
If $t_0\in C$, the set $\{n\in\N\tq t_0=c_n\}$ is not empty; by continuity
of $\varphi$ and $\gamma$, we can find $\eta>0$ such that
for every $s\in {]}0,\eta]$,
\begin{gather*}
\varphi(t_0+s)-\varphi(t_0)\le (\eps/2) \sum_{c_n=t_0} 2^{-n}
\quad\text{and}\quad
0\le \gamma(t_0+s)-\gamma(t_0)+(\eps/2) \sum_{c_n=t_0} 2^{-n},
\end{gather*}
hence from $t_0\in A$ we deduce again
\begin{align*}
\varphi(t_0+s)-\varphi(0) &\le \gamma(t_0+s)-\gamma(0) +
\eps t_0+\eps \sum_{c_n<t_0+s} 2^{-n}\\
&\le \gamma(t_0+s)-\gamma(0) +
\eps (t_0+s)+\eps \sum_{c_n<t_0+s} 2^{-n},
\end{align*}
which is a contradiction.
\end{proof}
\section{Subderivatives and subdifferentials}\label{subsubsect}

In the sequel, $X$ is a real Banach space,
$X^*$ is its topological dual,
and $\la .,. \ra$ is the duality pairing.
For $x, y \in X$, we let $[x,y]:=\{ x+t(y-x) \tq t\in[0,1]\}$;
the sets $]x,y[$ and $[x,y[$ are defined accordingly.
Set-valued operators $T:X\rightrightarrows X^*$
are identified with their graph $T\subset X\times X^*$.
All extended-real-valued functions $f : X\to\xR$ are assumed to be
lower semicontinuous (lsc)
and \textit{proper}, which means that
the set  $\dom f:=\{x\in X\tq f(x)<\infty\}$ is non-empty.

\medbreak
The framework, terminology and notation are the same as in our work
\cite{Las16}. For the reader's convenience, we briefly recall
the main definitions and facts.

For a lsc function $f:X\to\xR$, a point $\xb\in\dom f$ and
a direction $u\in X$,
we consider the following basic subderivatives:

- the (lower right Dini) \textit{radial subderivative}: 
\begin{equation*}
f^r(\xb;u):=\liminf_{t\searrow 0}\,\frac{f(\xb+tu)-f(\xb)}{t},
\end{equation*}
its upper version: 
\begin{equation*}
f^r_+(\xb;u):=\limsup_{t\searrow 0}\,\frac{f(\xb+tu)-f(\xb)}{t},
\end{equation*}
and its upper strict version (the \textit{Clarke subderivative}): 
\begin{equation*}
f^0(\bx;u):=  
\limsup_{t \searrow 0 \atop{(x,f(x)) \to (\bx,f(\bx))}}\frac{f(x+tu) -f(x)}{t};
\end{equation*}

- the (lower right Dini-Hadamard) \textit{directional subderivative}:
\begin{equation*}
f^d(\bx;u):=
\liminf_{t \searrow 0 \atop{u' \to u}}\frac{f(\bx+tu')-f(\bx)}{t},
\end{equation*}
and its upper strict version (the Clarke-Rockafellar subderivative): 
\begin{equation*}
f^\uparrow(\bx;u):= \sup_{\delta>0} 
\limsup_{t \searrow 0 \atop{(x,f(x)) \to (\bx,f(\bx))}}
\inf_{u' \in B_{\delta}(u)}\frac{f(x+tu') -f(x)}{t}.
\end{equation*}

It is immediate from these definitions that the following inequalities hold
($\rightarrow$ means $\le$):
\begin{align*}
f^r(\xb;u)  & \rightarrow f^r_+(\xb;u)\rightarrow f^0(\xb;u)\\
\uparrow \quad &     \qquad\qquad\qquad\quad\uparrow\\
f^d(\xb;u)  & \qquad\longrightarrow \quad\quad f^\uparrow(\xb;u)
\end{align*}
For $f$ locally Lipschitz at $\xb$,
one has $f^r(\xb;u)=f^d(\xb;u)$ and $f^0(\xb;u)=f^\uparrow(\xb;u)$.
For $f$ lsc convex, one has $f^d(\xb;u)=f^\uparrow(\xb;u)$.
A function $f$ satisfying such an equality is called \textit{regular}. 
However, in general, $f^d(\xb;u)<f^\uparrow(\xb;u)$.
\medbreak
Next, given a lsc function $f:X\to\xR$ and a point $\xb\in\dom f$,
we consider the following two basic subsets of the dual space $X^*$:

- the \textit{Moreau-Rockafellar subdifferential}
(the subdifferential of convex analysis):
\begin{equation*}\label{convex-sdiff}
 \del_{MR} f (\xb) :=
 \{ x^* \in X^* \tq \la x^*,y-\xb\ra + f(\xb) \leq f(y),\, \forall y \in X \};
\end{equation*}

- the \textit{Clarke subdifferential}, associated to the Clarke-Rockafellar subderivative:
\begin{eqnarray*}\label{Csub}
\partial_{C} f(\bx) := \{x^* \in X^* \tq \langle x^*,u\rangle \leq
f^\uparrow(\bx;u), \, \forall u \in X\}.
\end{eqnarray*}

All the classical subdifferentials (proximal,
Fr\'echet, Hadamard, Ioffe, Michel-Penot, \ldots)
lie between these two subsets,
and for a lsc convex $f$, all these subdifferentials coincide.
\ps
In the sequel, we call \textit{subdifferential} any operator $\del$ that associates 
a set-valued mapping $\partial f: X \rightrightarrows X^\ast$
to each function $f$ on $X$ so that
\begin{equation*}\label{inclusdansClarke}
\del_{MR} f\subset \partial f\subset \partial_{C} f
\end{equation*}
and the following \textit{Separation Principle}
is satisfied on $X$:
\medbreak
(SP)
\textit{For any lsc $f,\varphi$ with $\varphi$ convex Lipschitz
near $\xb\in\dom f $,
if $f+\varphi$ admits a local minimum at $\xb$, then
$0\in \delc f(\xb)+ \del \varphi(\xb),$ where
\begin{multline*}
\delc f(\xb):= \{\, \xb^*\in X^*\tq \mbox{there is a net }((x_\nu,x^*_\nu))_\nu\subset \del f \mbox{ with }\\
       (x_\nu,f(x_\nu))\to (\bx,f(\bx)),\ x^*_\nu\tow \bx^*,\ \limsup_\nu\,\la x^*_\nu,x_\nu-\xb\ra\le 0\,\}.
\end{multline*}
}
The Clarke subdifferential, the Michel-Penot (moderate) subdifferential and
the Ioffe subdifferential satisfy the Separation Principle in any Banach space.
The elementary subdifferentials (proximal, Fr\'echet, Hadamard, \ldots),
as well as their viscosity and limiting versions,
satisfy the Separation Principle in appropriate Banach spaces.
See, e.g. \cite{Iof12,JL13,Pen13} and the references therein.
\ps
Subdifferentials satisfying the Separation Principle
are densely defined:
\begin{theorem}[Density of subdifferentials]\label{dense}
Let $X$ be a Banach space,
$f:X\to\xR$ be lsc and $\xb\in\dom f$.
Then, there exists a sequence $((x_n,x_n^*))_n\subset\del f$ such that
$x_n\to \xb$, $f(x_n)\to f(\xb)$ and $\limsup_n \la x_n^*, x_n-\bx\ra\le 0$.
\end{theorem}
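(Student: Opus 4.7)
The plan is to combine Ekeland's variational principle with the Separation Principle (SP); this is the classical route for density results of this kind.

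First I would fix $\eps_n:=1/n$ and use the lower semicontinuity of $f$ at $\xb$ to pick a radius $\delta_n>\eps_n$ on which $f\ge f(\xb)-\eps_n^2$. Applying Ekeland's variational principle to $f$ on the complete metric space $\overline{B}(\xb,\delta_n)$, with $\xb$ playing the role of an $\eps_n^2$-minimizer, I would extract a point $z_n$ satisfying $\|z_n-\xb\|\le\eps_n$, $f(z_n)\le f(\xb)$, and strictly minimizing $w\mapsto f(w)+\eps_n\|w-z_n\|$ on $\overline{B}(\xb,\delta_n)$. Since $\eps_n<\delta_n$, this is actually a local minimum on $X$.

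Next, as $\varphi_n(w):=\eps_n\|w-z_n\|$ is convex and $\eps_n$-Lipschitz, (SP) applied at $z_n$ yields $0\in\delc f(z_n)+\del\varphi_n(z_n)$, and since $\del\varphi_n(z_n)$ is contained in the dual ball of radius $\eps_n$, one obtains $\bar u_n^*\in\delc f(z_n)$ with $\|\bar u_n^*\|\le\eps_n$. Unwinding the definition of $\delc f(z_n)$, I would take the corresponding net $((y_\nu,y_\nu^*))_\nu\subset\del f$ and, exploiting directedness, pick from it a single element $(x_n,x_n^*)$ satisfying simultaneously the four $1/n$-approximations $\|x_n-z_n\|\le 1/n$, $|f(x_n)-f(z_n)|\le 1/n$, $\la x_n^*,x_n-z_n\ra\le 1/n$, and $\la x_n^*,z_n-\xb\ra\le\la\bar u_n^*,z_n-\xb\ra+1/n$ (the last obtained by testing the weak-$*$ convergence $y_\nu^*\tow\bar u_n^*$ against the fixed vector $z_n-\xb$).

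The splitting $\la x_n^*,x_n-\xb\ra=\la x_n^*,x_n-z_n\ra+\la x_n^*,z_n-\xb\ra$, combined with $\la\bar u_n^*,z_n-\xb\ra\le\|\bar u_n^*\|\,\|z_n-\xb\|\le\eps_n^2$, then gives $\la x_n^*,x_n-\xb\ra\le 2/n+\eps_n^2$, whence $\limsup_n\la x_n^*,x_n-\xb\ra\le 0$; the remaining conditions $x_n\to\xb$ and $f(x_n)\to f(\xb)$ follow immediately from the triangle inequality. The main obstacle is the final net-to-sequence extraction, since the weak-$*$ topology of a general dual is not metrizable; however, the argument requires only one suitably chosen element of the net per $n$, and finitely many asymptotic conditions can always be enforced at a single sufficiently advanced common index by directedness, so this step is in fact routine.
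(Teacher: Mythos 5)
Your overall strategy—Ekeland's variational principle on a small closed ball, the Separation Principle applied to the convex Lipschitz penalty $w\mapsto\eps_n\|w-z_n\|$, then extraction of a single well-chosen element from the net defining $\delc f(z_n)$ by directedness—is the classical route for this statement; the paper itself gives no argument and simply cites \cite[Theorem 2.1]{JL14}, whose proof follows the same pattern. The final splitting $\la x_n^*,x_n-\xb\ra=\la x_n^*,x_n-z_n\ra+\la x_n^*,z_n-\xb\ra$ and the testing of the weak-$*$ convergence against the single fixed vector $z_n-\xb$ are both sound (no metrizability is needed, as you say). However, your very first step fails as written: lower semicontinuity at $\xb$ provides, for the prescribed tolerance $\eps_n^2$, \emph{some} radius on which $f\ge f(\xb)-\eps_n^2$, but it cannot guarantee a radius $\delta_n>\eps_n=1/n$. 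For $f(x)=-\sqrt{\|x\|}$ and $\xb=0$, the inequality $f\ge f(0)-1/n^2$ holds only on the ball of radius $1/n^4$, so no admissible $\delta_n>1/n$ exists; consequently the Ekeland application with distance scale $\eps_n$ (which needs $\eps_n<\delta_n$ to keep $z_n$ interior) is not available with the parameters you fixed.

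The gap is local and repairable, because nothing in your argument requires $\|\bar u_n^*\|$ itself to be small—only the product $\|\bar u_n^*\|\,\|z_n-\xb\|$ must vanish. Decouple Ekeland's two parameters: by lower semicontinuity choose $\delta_n\in{]}0,1/n]$ with $f\ge f(\xb)-\eps_n$ on $\overline{B}(\xb,\delta_n)$, and apply Ekeland on that ball with value tolerance $\eps_n$ and distance scale $\lambda_n:=\delta_n/2$. Then $\|z_n-\xb\|\le\delta_n/2\le 1/(2n)$, $f(\xb)-\eps_n\le f(z_n)\le f(\xb)$, and $z_n$ is an interior local minimizer of $f+(2\eps_n/\delta_n)\|\cdot-z_n\|$, so (SP) yields $\bar u_n^*\in\delc f(z_n)$ with $\|\bar u_n^*\|\le 2\eps_n/\delta_n$, whence $\la\bar u_n^*,z_n-\xb\ra\le(2\eps_n/\delta_n)(\delta_n/2)=\eps_n$. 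Your net-extraction step then gives $\la x_n^*,x_n-\xb\ra\le 2/n+\eps_n$, and the three conclusions follow exactly as you argued. With this reparametrization the proof is correct and coincides with the intended one.
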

\begin{proof}
See \cite[Theorem 2.1]{JL14}.
\end{proof}
A sequence $(x_n)\subset X$ is said to be
\textit{directionally convergent to $\xb$ in the direction $v\in X$},
written $x_n\to_v \xb$,
if there are two sequences $t_n\searrow 0$ (that is, $t_n\to 0$ with $t_n>0$)
and $v_n\to v$ such that
$x_n=\xb + t_n v_n$ for all $n$.
Observe that for $v=0$, $x_n\to_v\xb$ simply means $x_n\to\xb$.
\ps
We call \textit{subderivative associated to a subdifferential $\del f$}
at a point $(\xb,u)\in \dom f\times X$, the \textit{support function} of
the set $\del f(\xb)$ in the direction $u$,
which we denote by
$$
f^\del(\xb;u):=
\sup \,\{\la \xb^*,u \ra \tq \xb^*\in\del f(\xb)\}.
$$

\ps
Subderivatives and subdifferentials are linked by the following formula
where, given $f:X\to\Rex$, we have denoted by $f':\dom f\times X\to \Rb$
any function lying between the subderivatives $f^d$ and $f^\uparrow$, that is: 
$f^d\le f'\le f^\uparrow$:

\begin{theorem}[Subderivative-subdifferential duality formula]\label{formula}
Let $X$ be a Banach space,
$f:X\to\xR$ be lsc, $\xb\in\dom f$ and $u\in X$.
Then, for any direction $v\in X$ and any real number $\alpha\ge 0$, one has
\begin{subequations}\label{formula0}
\begin{align}
\limsup_{x\to_v\xb} f^r(x;u+\alpha (\xb-x))&=
\limsup_{x\to_v\xb} f'(x;u+\alpha (\xb-x)) \label{formula0a}\\
&=\limsup_{x\to_v\xb}\,f^\del(x;u+\alpha (\xb-x)).
\label{formula0b}
\end{align}
\end{subequations}
\end{theorem}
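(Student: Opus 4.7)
The plan is to set $w_x := u + \alpha(\xb - x)$ (so $w_x\to u$ as $x\to\xb$), denote by $A$, $B$, $C$ the three limsups over $x\to_v\xb$ of $f^r(x;w_x)$, $f'(x;w_x)$, $f^\del(x;w_x)$ respectively, and sandwich them using the pointwise relations between the various subderivatives. From $f^d \le f' \le f^\uparrow$, $f^d \le f^r$, and $f^\del \le f^\uparrow$ (the last because $\del f \subset \del_C f$, and $f^\uparrow(x;\cdot)$, being sublinear and lsc in its direction argument, is by Hahn--Banach the support function of $\del_C f(x)$), we read off at once
$$\limsup_{x\to_v\xb} f^d(x;w_x) \le \min\{A,B\}, \qquad \max\{B,C\} \le \limsup_{x\to_v\xb} f^\uparrow(x;w_x).$$
So the theorem reduces to establishing the two reverse inequalities $A \ge \limsup_{x\to_v\xb} f^\uparrow(x;w_x)$ and $C \ge \limsup_{x\to_v\xb} f^\uparrow(x;w_x)$.

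For $A \ge \limsup f^\uparrow$, I would use a diagonal extraction. Fix a sequence $x_n\to_v\xb$ realising the right-hand side. Unwinding the definition of $f^\uparrow(x_n;w_{x_n})$ yields, for each $n$ and each $\eta>0$, a point $y$ with $(y,f(y))\to (x_n,f(x_n))$, a scalar $t\searrow 0$, and a direction $u'$ with $\|u'-w_{x_n}\|<1/n$, whose quotient $[f(y+tu')-f(y)]/t$ approximates $f^\uparrow(x_n;w_{x_n})$ within $\eta$. Selecting such $y_n$ close enough to $x_n$ that $y_n\to_v\xb$ still holds, and comparing the quotient in the direction $w_{y_n}$ (which is close to both $w_{x_n}$ and $u'$) to the one in direction $u'$, the residual errors vanish as $n\to\infty$, giving $A \ge \limsup f^\uparrow - \eta$, and $\eta$ is arbitrary.

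For $C \ge \limsup f^\uparrow$, I would use the Separation Principle. Fix $r$ strictly below the right-hand side, so that $r < f^\uparrow(x;w_x)$ along some $x\to_v\xb$. Since $f^\uparrow(x;\cdot)$ is the support function of $\del_C f(x)$, Hahn--Banach furnishes $\ell\in X^*$ with $\ell \le f^\uparrow(x;\cdot)$ and $\la\ell,w_x\ra > r$. A penalisation/Ekeland-type argument applied to $f - \la\ell,\cdot\ra$ localises an approximate minimiser near $x$; the Separation Principle, combined with the density Theorem \ref{dense}, then yields $y^*\in\del f(y)$ at some nearby $y\to_v\xb$ with $\la y^*, w_y\ra$ close to $\la\ell,w_x\ra > r$. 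Hence $C \ge r$, and letting $r \uparrow \limsup f^\uparrow$ completes the argument. The main obstacle is the bookkeeping of the \emph{directional} mode of convergence $y_n\to_v\xb$ through the nested approximations, and the matching of support values at the moving direction $w_y$ with those at $w_x$ or at the inner approximation $u'$. The Separation Principle step is the most delicate: the perturbed minimisation of $f - \la\ell,\cdot\ra$ must be tuned so its approximate minimiser is forced into a cone along the direction $v$, rather than merely into a topological neighbourhood of $\xb$; the correction $\alpha(\xb-x)$ in $w_x$ plays a helpful role here by adjusting the direction in sync with the approach.
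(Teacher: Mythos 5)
You should first be aware that the paper itself does not prove Theorem \ref{formula}: it is imported wholesale from \cite[Theorem 3]{Las16}, so any self-contained argument is necessarily your own route --- and yours has a gap already at the level of the reduction. Write $w_x:=u+\alpha(\xb-x)$, and set $D:=\limsup_{x\to_v\xb}f^d(x;w_x)$, $U:=\limsup_{x\to_v\xb}f^\uparrow(x;w_x)$, with $A,B,C$ as in your notation. Your pointwise inequalities give $D\le\min\{A,B\}$ and $\max\{B,C\}\le U$, and your two proposed estimates would give $A\ge U$ and $C\ge U$; from all of this you can only conclude $C=U$ and $D\le B\le U\le A$. Nothing bounds $A$ from above, and nothing bounds $B$ from below by $U$, so $A=B=C$ does not follow. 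These missing bounds are not pointwise facts: $f^r\le f^\uparrow$ is false in general (for a lsc convex $f$ one has $f^\uparrow=f^d\le f^r$, possibly strictly, e.g.\ for the indicator of a convex set and a direction in the tangent cone but not in the radial cone), and $f^\uparrow\le f^d$ is false as well. What a complete proof actually needs are the two nontrivial inequalities $A\le C$ (the genuinely dual estimate $\limsup f^r\le\limsup f^\del$, where the Separation Principle and Theorem \ref{dense} must enter) and $U\le D$ (so the statement survives down to $f'=f^d$); combined with the trivial bounds $f^d\le f^r$ and $f^\del\le f^\uparrow$ these close the cycle $A\le C\le U\le D\le A$ and force $B$ to agree. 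Neither of these two inequalities appears in your plan.

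Both of your sketches also have internal problems. In the argument for $A\ge U$, unwinding $f^\uparrow(x_n;w_{x_n})>r$ produces a large difference quotient at \emph{one} small $t$; this does not lower-bound $f^r(y_n;w_{y_n})$, which is a liminf as $t\searrow 0$ at the fixed point $y_n$. The repair is Proposition \ref{mvi}: it gives an intermediate point $z\in[y_n,y_n+tw_{y_n}[$ with $f^r(z;w_{y_n})$ at least the quotient, and since $w_z$ is a positive multiple of $w_{y_n}$, positive homogeneity of $f^r(z;\cdot)$ lets you pass to $f^r(z;w_z)$; moreover ``comparing the quotient in direction $w_{y_n}$ to the one in direction $u'$'' is unavailable for a merely lsc $f$ (no Lipschitz estimate) --- you must instead exploit the infimum over $u'\in B_\delta(w_{x_n})$ built into the definition of $f^\uparrow$ and take $u'=w_{y_n}$ outright. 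In the argument for $C\ge U$, the step ``penalisation/Ekeland applied to $f-\la\ell,\cdot\ra$ localises an approximate minimiser near $x$'' is unfounded: a Clarke subgradient $\ell\in\del_C f(x)$ carries no minimisation information at $x$ (for $f=-|\cdot|$ on $\R$, $0\in\del_Cf(0)$ while $0$ is a strict maximiser), so Ekeland's principle gives no control keeping the output point near $x$, nor any reason that the resulting $y^*\in\del f(y)$ satisfies $\la y^*,w_y\ra\approx\la\ell,w_x\ra$; in addition $\del_Cf(x)$ may be empty, so the Hahn--Banach selection of $\ell$ can fail. The workable dual mechanism (of the type developed in \cite{JL13} and relied on in \cite{Las16}) starts from the primal information $r<f^r(x;w_x)$, i.e.\ difference quotients along the segment $[x,x+tw_x]$, applies (SP) to $f$ plus a convex Lipschitz penalty attached to that segment, and produces a point $y$ near the segment together with $y^*\in\del f(y)$ satisfying $r<\la y^*,u+\alpha(\xb-y)\ra$; this displacement of the base point is where the correction term $\alpha(\xb-x)$ really comes from, not from re-aiming an Ekeland argument for $f-\la\ell,\cdot\ra$.
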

\begin{proof}
See \cite[Theorem 3]{Las16}.
\end{proof}
\begin{remark}
(a) For $f$ locally Lipschitz at $\xb$, the formulas \eqref{formula0}
do not depend on $\alpha\ge 0$ since
\begin{equation*}\label{formula0lip}
\limsup_{x\to_v\xb} f^r(x;u+\alpha (\xb-x))=\limsup_{x\to_v\xb} f^r(x;u),
\end{equation*}
but they may depend on the direction $v\in X$:
for $f:x\in \R\mapsto f(x):=-|x|$ and $u\ne 0$, one has
$$\limsup_{x\to_u 0} f^r(x;u)=-|u|<\limsup_{x\to 0} f^r(x;u)=|u|.$$
\smallbreak

(b) For arbitrary lsc $f$, the value of the expressions
in \eqref{formula0}
may depend on $\alpha\ge 0$ even for convex $f$.
Indeed, in general for a lsc convex $f$ one has
$$f^r(\xb;u)<\limsup_{x\to\xb} f^r(x;u),$$
while always  (see \cite[Proposition 4]{Las16})
\begin{equation}\label{convex-uss}
f^r(\xb;u)=\inf_{\alpha\ge 0}\limsup_{x\to\xb} f^r(x;u+\alpha (\xb-x)).
\end{equation}
\end{remark}
\section{Upper semismooth functions}\label{usssect}

A lsc function $f:X\to\xR$ is said to be \textit{radially accessible} at a point $\xb\in\dom f$ from a direction $u\in X$ provided
$$f(\xb)=\liminf_{t\searrow 0}f(\xb+tu),$$
or equivalently, provided
there exists a sequence $t_n\searrow 0$ such that 
$f(\xb+t_n u)\to f(\xb)$.
Every convex lsc function $f:X\to\xR$
is radially accessible at $\xb$ from any $u\in X$ such that
$\xb+u\in \dom f$. On the other hand,
it is easily seen that if a lsc $f$ satisfies
$f^r(\xb;u)<+\infty$, then $f$ is radially accessible at $\xb$ from $u$.
The converse is not true: the function 
$f:t\in\R\mapsto \sqrt{|t|}$ is continuous, yet $f^r(0;u)=+\infty$.
For more examples and properties, see \cite{Las16}.

\begin{proposition}[Radial stability of the upper radial subderivative]\label{devdir}
Let $X$ be a Hausdorff locally convex space, $f:X\to\xR$ be lsc,
$\xb\in\dom f$ and $u\in X$
such that $f$ is radially accessible at $\xb$ from $u$.
Then, there is a sequence
$\mu_n\searrow 0$ such that $f(\xb+\mu_n u)\to f(\xb)$ and
\begin{equation}\label{applimvi1}
f^r_+(\xb;u)\le \liminf_{n\to +\infty}f^r(\xb+\mu_n u;u).
\end{equation}
In particular,
\begin{equation}\label{below00}
f^r_+(\xb;u)\le \inf_{\alpha\ge 0}
\limsup_{x\to_u\xb} f^r(x;u+\alpha (\xb-x)).
\end{equation}
\end{proposition}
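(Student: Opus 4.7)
The plan is to reduce the statement to a single-variable argument through the lsc function $\psi\colon[0,\infty)\to\xR$ defined by $\psi(t):=f(\bar{x}+tu)$. This $\psi$ is lsc and finite at $0$, the radial accessibility hypothesis is $\liminf_{t\searrow 0}\psi(t)=\psi(0)$, and $\psi^r(t_0;+1)=f^r(\bar{x}+t_0u;u)$ for every $t_0\ge 0$ with $\psi^r_+(0;+1)=f^r_+(\bar{x};u)=:\beta$. The first claim (\ref{applimvi1}) then reduces to finding $\mu_n\searrow 0$ with $\psi(\mu_n)\to\psi(0)$ and $\liminf_n\psi^r(\mu_n;+1)\ge\beta$. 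If $\beta=-\infty$, any sequence supplied by radial accessibility works. Otherwise, I would choose $\lambda_n\to\beta$ with $\lambda_n<\beta$ (take $\lambda_n=n$ when $\beta=+\infty$), and for each $n$ pick $s_n>0$ small enough that $s_n|\lambda_n|\to 0$ while also imposing $(\psi(s_n)-\psi(0))/s_n>\lambda_n$; this is possible by the very definition of $\beta$ as $\limsup_{t\searrow 0}(\psi(t)-\psi(0))/t$. Rescaling Proposition \ref{mvi} from $[0,1]$ to $[0,s_n]$ and applying it with slope $\lambda_n$ (the scaled slope hypothesis $\lambda_n\le(\psi(s_n)-\psi(0))/s_n$ being exactly what we just arranged) yields $\tau_n\in[0,s_n)$ with $\psi(\tau_n)\le\psi(0)+\tau_n\lambda_n$ and $\lambda_n\le\psi^r(\tau_n;+1)$.

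Setting $\mu_n:=\tau_n$, the two target properties follow quickly. Since $\mu_n\le s_n\to 0$, the lsc of $\psi$ gives $\liminf_n\psi(\mu_n)\ge\psi(0)$, while the bound $\psi(\mu_n)\le\psi(0)+s_n|\lambda_n|\to\psi(0)$ gives $\limsup_n\psi(\mu_n)\le\psi(0)$; hence $\psi(\mu_n)\to\psi(0)$. The inequality $\liminf_n\psi^r(\mu_n;+1)\ge\lim_n\lambda_n=\beta$ is then immediate, proving (\ref{applimvi1}). For (\ref{below00}), put $x_n:=\bar{x}+\mu_n u$, so that $x_n\to_u\bar{x}$ (witnessed by $t_n=\mu_n$ and $v_n=u$). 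For fixed $\alpha\ge 0$ and $n$ large enough that $1-\alpha\mu_n>0$, we have $u+\alpha(\bar{x}-x_n)=(1-\alpha\mu_n)u$, and positive homogeneity of $f^r(x_n;\cdot)$ gives $f^r(x_n;u+\alpha(\bar{x}-x_n))=(1-\alpha\mu_n)f^r(x_n;u)$. Since $1-\alpha\mu_n\to 1$, combining with (\ref{applimvi1}) yields $\limsup_{x\to_u\bar{x}}f^r(x;u+\alpha(\bar{x}-x))\ge\beta$, and taking the infimum over $\alpha\ge 0$ produces (\ref{below00}).

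The main obstacle is the case $\beta=+\infty$: one cannot simply take $s_n\searrow 0$ along which $(\psi(s_n)-\psi(0))/s_n\to+\infty$ and hope that $\psi(s_n)\to\psi(0)$, because $\psi$ can have narrow spikes where the difference quotient blows up but the value at the spike stays far above $\psi(0)$. Proposition \ref{mvi} circumvents this by handing us a \emph{nearby} minimizer $\tau_n$ of $t\mapsto\psi(t)-\lambda_n t$ on $[0,s_n]$, where the forward subderivative is at least $\lambda_n$ and the function value is controlled via $\psi(\tau_n)\le\psi(0)+\tau_n\lambda_n$; shrinking $s_n$ fast enough to force $s_n\lambda_n\to 0$ is the decisive technical choice that makes the upper bound on $\psi(\tau_n)$ converge to $\psi(0)$.
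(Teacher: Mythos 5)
Your reduction to the one-variable function $\psi(t)=f(\xb+tu)$, the rescaling of Proposition \ref{mvi}, the value control $\psi(\tau_n)\le\psi(0)+\tau_n\lambda_n$ with $s_n|\lambda_n|\to 0$, and the homogeneity argument deriving \eqref{below00} from \eqref{applimvi1} are all sound. (The paper itself only cites \cite[Proposition 7]{Las16} for this result, so there is no in-paper proof to compare with.) But there is a genuine gap at the key step: Proposition \ref{mvi} only yields $\tau_n\in[0,s_n{[}$, and nothing in your construction prevents $\tau_n=0$ (the minimizer of $t\mapsto\psi(t)-\lambda_n t$ on $[0,s_n]$ can perfectly well sit at $0$). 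In that case $\mu_n:=\tau_n$ is not a sequence $\mu_n\searrow 0$ in the paper's sense ($\mu_n>0$ is required), \eqref{applimvi1} is not obtained, and the ``in particular'' step collapses, since $x_n=\xb+\mu_n u$ with $\mu_n=0$ cannot be written as $\xb+t_nv_n$ with $t_n\searrow 0$, $v_n\to u$ when $u\ne 0$, so these points do not witness $x\to_u\xb$.

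That this is not a cosmetic issue is shown by the fact that, except in the trivial branch $\beta=-\infty$, your argument never uses the radial accessibility hypothesis, whereas the proposition is false without it: take $f$ with $f(\xb)=0$ and $f(\xb+tu)=1$ for all $t>0$ (lsc, $\beta=+\infty$); then no sequence $\mu_n\searrow 0$ with $f(\xb+\mu_nu)\to f(\xb)$ exists at all, and indeed in this example your scheme forces $\tau_n=0$ for all large $n$ (since $\psi(t)-nt=1-nt>0$ on ${]}0,s_n]$ once $ns_n<1$). So the case $\tau_n=0$ is exactly where radial accessibility must enter. A repair along your own lines: use radial accessibility to choose $\delta_n\searrow 0$ with $\psi(\delta_n)\to\psi(0)$, arrange $\delta_n$ and $\psi(\delta_n)-\psi(0)$ to be small relative to $s_n$ so that the slope condition $\lambda_n\le\bigl(\psi(s_n)-\psi(\delta_n)\bigr)/(s_n-\delta_n)$ still holds (possibly after shrinking $\lambda_n$ slightly), and apply the rescaled mean value inequality on $[\delta_n,s_n]$; this forces $\tau_n\ge\delta_n>0$ while the bound $\psi(\tau_n)\le\psi(\delta_n)+(\tau_n-\delta_n)\lambda_n\to\psi(0)$ survives, after which the rest of your argument goes through unchanged.
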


\begin{proof}
See \cite[Proposition 7]{Las16}.
\end{proof}

Theorem \ref{formula}, Formula \eqref{convex-uss}
for convex lsc functions and Proposition \ref{devdir}
suggest to consider the following class of lsc functions.

\begin{defn}\label{defdsemismooth}
A lsc function $f:X\to\xR$ is said to be \textit{upper semismooth}
at a point $\xb\in\dom f$ in the direction $u\in X$
provided
\begin{align}
\inf_{\alpha\ge 0}\limsup_{x\to_u \xb} f^r(x; u +\alpha (\xb-x))\le f^r(\xb;u).
\label{dsemismooth}
\end{align}
\end{defn}

\begin{remark}\label{remdefdsemismooth}
(a) In case $f$ is locally Lipschitz at $\xb$, \eqref{dsemismooth}
boils down to
\begin{align}\label{dsemismoothLip}
\limsup_{x\to_u \xb} f^r(x; u)\le f^r(\xb;u). \tag{\ref{dsemismooth}Lip}
\end{align}
We then essentially recover the class of locally Lipschitz upper semismooth
functions considered by Borwein-Moors \cite[p.\ 305]{BM98a}
(with the slight difference that
the upper radial subderivative $f^r_+$ is used there
instead of the lower radial subderivative $f^r$ used here).
As observed in \cite[p.\ 305]{BM98a}, the terminology is
justified by the characterization of semismooth functions given by
Correa-Jofr\'e \cite[Corollary 6.3]{CJ89}
(see also Proposition \ref{exsemismooth}\,(d) below).
\ps
(b) In case $f$ is radially accessible at $\xb$ from the direction $u$,
the inequality \eqref{dsemismooth} becomes:
\begin{align}\label{dsemismoothequal}
f^r(\xb;u)=
\inf_{\alpha\ge 0}\limsup_{x\to_u \xb} f^r(x; u +\alpha (\xb-x)),
\tag{\ref{dsemismooth}bis}
\end{align}
and in addition, $f^r(\xb;u)=f^r_+(\xb;u)$,
i.e.\ the lower radial derivative and its upper version
coincide at $\xb$ in the direction $u$.
This follows by combining \eqref{dsemismooth} with the inequality
\eqref{below00} in Proposition \ref{devdir}.
\smallbreak
(c) One can have $f^r(\xb;u)=\pm\infty$ in \eqref{dsemismooth}.
For example, the continuous function
$x\mapsto \sqrt{|x|}$ is upper semismooth
at $\xb=0$ in the direction $u=1$ with $f^r(0;1)=+\infty$,
while the continuous function
$x\mapsto -\sqrt{|x|}$ is upper semismooth
at $\xb=0$ in the direction $u=1$ with $f^r(0;1)=-\infty$.
\end{remark}
\medbreak
We shall also consider a strict variant of the above notion:

\begin{defn}\label{defsemismooth}
A lsc function $f:X\to\xR$ is said to be \textit{strictly upper semismooth}
at a point $\xb\in\dom f$ in the direction $u\in X$
provided
\begin{align}
\inf_{\alpha\ge 0}\limsup_{x\to \xb} f^r(x; u +\alpha (\xb-x))\le f^r(\xb;u).
\label{semismooth}
\end{align}
In fact, equality holds in \eqref{semismooth}
since the reverse inequality is always true.
\end{defn}

\begin{remark}\label{remdefsemismooth}
(a) As above, in case $f$ is locally Lipschitz at $\xb$,
\eqref{semismooth} boils down to
\begin{align}\label{semismoothLip}
\limsup_{x\to \xb} f^r(x; u)\le f^r(\xb;u).
\tag{\ref{semismooth}Lip}
\end{align}
Since $\limsup_{x\to \xb} f^r(x; u)=f^0(\xb;u)$ according to
Borwein-Str{\'o}jwas \cite[Theorem 2.1]{BS89},
the inequality \eqref{semismoothLip} is actually equivalent to the equality
$f^r(\xb;u)=f^0(\xb;u)$.
The terminology is therefore justified since the latter equality
means that the lower radial subderivative and its upper strict version
coincide at $\xb$ in the direction $u$.
Locally Lipschitz functions satisfying such an equality in every direction
$u\in X$ are called (Clarke) \textit{regular at $\xb$} (see also below the extension
of this concept to the general case of lsc functions).
\smallbreak
(b) Evidently, \eqref{semismooth} is more demanding than \eqref{dsemismooth},
so every strictly upper semismooth function is upper semismooth.
The converse is not true:
the locally Lipschitz function $f:x\in\R\mapsto -|x|$ satisfies
$$\limsup_{x\to_u 0} f^r(x;u)=-|u|=f^r(0;u)<\limsup_{x\to 0} f^r(x;u)=|u|,$$
so $f$ is upper semismooth at $\xb=0$ from any $u\ne 0$ but not strictly
upper semismooth.
\end{remark}

We now proceed to give examples of strictly and non-strictly upper semismooth
functions.
Let us recall the definition of the concepts we shall consider.
Let $X$ be a Banach space,
$f:X\to\xR$ be lsc, $\xb\in\dom f$ and $u\in X$, $u\ne 0$.
Then $f$ is said to be:

\PT
\textit{semismooth} (Mifflin \cite{Mif77})
at $\xb$ provided $f$ is locally Lipschitz and for all $u\in X$,
$$((x_n,x^*_n))\subset \del_C f \mbox{ with }
x_n\to_u \xb \Rightarrow \la x^*_n,u\ra \to f^r(\xb;u).$$
\ps
\PT
\textit{directionally Lipschitz at $\xb$ with respect to $u$}
(Rockafellar \cite{Roc80}) if $f$ is lsc and
\begin{equation}\label{dirlip}
\limsup_{\stackrel{(x,f(x))\to (\xb,f(\xb))}{t\searrow 0, v\to u}} \frac{f(x+tv)-f(x)}{t}<\infty.
\end{equation}
\ps
\PT
\textit{regular at $\xb$} (Rockafellar \cite{Roc79})
if $f^d(\xb;v)=f^\uparrow(\xb;v)$ for every $v\in X$.
\ps
\PT
\textit{directionally approximately convex at $\xb$}
(see Daniilidis-Georgiev \cite{DG04},
Daniilidis-Jules-Lassonde \cite{DJL09} and the references therein), if for every
$u\in S_X$ and $\varepsilon>0$ there exists $\delta>0$
such that for all $x,y\in B(\xb,\delta)$, with $x\not=y$ and
$(x-y)/\|x-y\|\in B(u,\delta)$, and all $t\in [0,1]$
\begin{equation}\label{dac}
f(tx+(1-t)y)\leq tf(x)+(1-t)f(y)+\varepsilon t(1-t)\Vert x-y\Vert.
\end{equation}
(In finite-dimensional spaces, a
locally Lipschitz function is (directionally) approximately convex
if and only if it is lower-$C^1$,
cf.\ \cite{Spi81,DG04}.)

\begin{proposition}[Examples of (strictly) upper semismooth functions]
\label{exsemismooth}
Let $X$ be a Banach space,
$f:X\to\xR$ be lsc, $\xb\in\dom f$ and $u\in X$ with $u\ne 0$
and $\xb+u\in\dom f$.
Each of the following $f$ is strictly upper semismooth at $\xb$ in the direction $u$:
\ps
{\rm(a)}
$f$ is directionally Lipschitz at $\xb$ with respect to $u$ and regular at $\xb$;
\ps
{\rm(b)}
$f$ is convex;
\ps
{\rm(c)}
$f$ is directionally approximately convex.
\ps\noindent
The following $f$ is upper semismooth at $\xb$ in the direction $u$:
\ps
{\rm(d)}
$f$ is locally Lipschitz and semismooth at $\xb$.
\end{proposition}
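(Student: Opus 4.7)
The plan is to treat the four classes of functions separately. In each case the goal is to derive the upper semismoothness inequality \eqref{semismooth} or \eqref{dsemismooth} from the defining property of the class. Case (b) is immediate: the identity \eqref{convex-uss} cited from \cite{Las16} is precisely \eqref{semismooth} written as an equality (and since $\limsup_{x \to_u \xb} \le \limsup_{x \to \xb}$, it automatically forces \eqref{dsemismooth} as well).

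For (c) I would imitate the convex-combination argument that underlies the convex case. After normalising $\|u\| = 1$ and setting $\alpha = 1/s$ with $s > 0$ small, one has, for any $t \in (0, s]$, the algebraic identity $x + t(u + \alpha(\xb - x)) = (1 - t/s)\,x + (t/s)(\xb + su)$, and for $x$ close to $\xb$ the unit direction of the segment $[x, \xb + su]$ lies in $B(-u, \delta)$; the symmetry of \eqref{dac} under $x \leftrightarrow y$ (which flips the relevant direction) then lets one invoke the approximate-convexity inequality. Dividing by $t$ and passing to $\liminf_{t \searrow 0}$ yields
\begin{equation*}
f^r(x; u + \alpha(\xb - x)) \;\le\; \frac{f(\xb + su) - f(x)}{s} + \frac{\eps\,\|x - \xb - su\|}{s}.
\end{equation*}
Taking $\limsup_{x \to \xb}$ (the lower semicontinuity of $f$ provides $-f(x) \le -f(\xb) + o(1)$), then $\liminf_{s \searrow 0}$, and finally $\eps \searrow 0$, delivers \eqref{semismooth}.

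For (d) I would apply the duality formula \eqref{formula0b} with $v = u$, $\alpha = 0$ and $\del = \del_C$, giving $\limsup_{x \to_u \xb} f^r(x; u) = \limsup_{x \to_u \xb} \sup_{x^* \in \del_C f(x)} \la x^*, u \ra$. Choosing $x_n \to_u \xb$ realising the right-hand limsup and $x_n^* \in \del_C f(x_n)$ with $\la x_n^*, u \ra \ge f^{\del_C}(x_n; u) - 1/n$, Mifflin's defining condition forces $\la x_n^*, u \ra \to f^r(\xb; u)$, and \eqref{dsemismoothLip} follows. For (a) the same duality formula, now with $v = 0$ and a free $\alpha \ge 0$, reduces the problem to bounding $\limsup_{x \to \xb} f^{\del_C}(x; u + \alpha(\xb - x))$. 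Here I would use three structural inputs available for directionally Lipschitz, regular $f$: the directional Lipschitz condition \eqref{dirlip} yields local boundedness of $\del_C f$ around $\xb$; the closed-graph property of $\del_C$ produces weak-$*$ cluster points $\bar x^* \in \del_C f(\xb)$ of near-maximisers; and regularity identifies $\sup_{x^* \in \del_C f(\xb)} \la x^*, u \ra = f^\uparrow(\xb; u) = f^d(\xb; u) \le f^r(\xb; u)$. The contribution $\alpha \la x_n^*, \xb - x_n \ra$ is absorbed by local boundedness, closing the estimate.

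The main obstacle I anticipate is case (a): to use the closed-graph property of $\del_C$ one needs convergence $f(x_n) \to f(\xb)$ along the sequences extracted from the outer limsup, which for a merely lsc directionally Lipschitz function is not automatic and must be secured from the epi-convergence condition built into the definition of $f^\uparrow$ (or, equivalently, from Rockafellar's hypertangency). The remaining three items are lighter: (b) is a direct citation, (c) is a quantitative refinement of the convex argument, and (d) is essentially a one-line invocation of the duality formula combined with the defining Mifflin property.
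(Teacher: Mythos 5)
Your items (b), (c), (d) are fine. For (b) you cite \eqref{convex-uss}, exactly as the paper does. For (c) you argue directly from the definition \eqref{dac} of directional approximate convexity (writing $x+t(u+\alpha(\xb-x))$ as a convex combination of $x$ and $\xb+su$ with $\alpha=1/s$), whereas the paper cites the inequality $f^\uparrow(x;v)\le f(x+v)-f(x)+\eps\|v\|$ from \cite[Proposition 1]{DJL09} and then uses Theorem \ref{formula}; your route is a self-contained reproof of that ingredient and works with $f^r$ directly, which is legitimate. For (d) you combine the duality formula (with $v=u$, $\alpha=0$, $\del=\del_C$) with Mifflin's definition, instead of quoting the Correa--Jofr\'e characterization $f^r(\xb;u)=\lim_{x\to_u\xb}f^r(x;u)$ as the paper does; both give \eqref{dsemismoothLip}.

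Item (a), however, has a genuine gap, and it is precisely where you flagged difficulty. Your plan rests on the claim that the directional Lipschitz condition \eqref{dirlip} yields local boundedness of $\del_C f$ around $\xb$, plus a closed-graph extraction of weak$^*$ cluster points in $\del_C f(\xb)$. This is false for the class in question: a directionally Lipschitz lsc function need not be locally Lipschitz, or even finite, near $\xb$. Take $f$ the indicator of a closed half-space and $\xb$ a boundary point, $u$ pointing into the interior: $f$ is directionally Lipschitz at $\xb$ with respect to $u$ and regular (being convex), yet $\del_C f(x)$ is an unbounded normal cone at every boundary point $x$ near $\xb$, so there is no local boundedness, no weak$^*$ compactness to extract cluster points, and the term $\alpha\la x_n^*,\xb-x_n\ra$ cannot be ``absorbed''. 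Moreover, the $\limsup_{x\to\xb}$ appearing in \eqref{semismooth} and in Theorem \ref{formula} carries no constraint $f(x)\to f(\xb)$, so the $f$-attentive convergence needed for the closed graph of $\del_C$ cannot be ``secured from the definition of $f^\uparrow$'' for the sequences realizing that limsup. The paper's proof of (a) avoids subdifferentials entirely: by Rockafellar \cite[Theorem 3]{Roc80}, directional Lipschitzness gives $f^\uparrow(\xb;u)=f^0(\xb;u)$; regularity gives $f^d(\xb;u)=f^\uparrow(\xb;u)$; hence $f^r(\xb;u)\ge f^d(\xb;u)=f^0(\xb;u)\ge\limsup_{x\to\xb}f^r(x;u)\ge\inf_{\alpha\ge 0}\limsup_{x\to\xb}f^r(x;u+\alpha(\xb-x))$, which is \eqref{semismooth}. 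You should replace your dual, compactness-based argument for (a) by this purely subderivative chain.
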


\begin{proof}
(a)
For $f$ directionally Lipschitz at $\xb$ with respect to $u$, one has 
$f^\uparrow(\xb;u)=f^0(\xb;u)$
(cf.\ \cite[Theorem 3]{Roc80}). If moreover $f$ is regular at $\xb$,
that is $f^d(\xb;v)=f^\uparrow(\xb;v)$ for every $v\in X$,
we derive that $f^d(\xb;u)=f^0(\xb;u)$.
Then,
\begin{equation*}
f^r(\xb;u)\ge f^d(\xb;u)=f^0(\xb;u)=\limsup_{x\to \xb} f^r(x; u)\ge
\inf_{\alpha\ge 0}\limsup_{x\to \xb} f^r(x; u +\alpha (\xb-x)).
\end{equation*}
Hence \eqref{semismooth} is satisfied.
\ps
(b) We know from \cite[Proposition 4]{Las16} that,
if $f$ is \lsc\ convex, then
$$f^r(\xb;u)=\inf_{\alpha\ge 0}\limsup_{x\to\xb} f^r(x;u+\alpha (\xb-x)).$$
Hence, \eqref{semismooth} holds.
\ps
(c)
A directionally approximately convex function satisfies the following property
(cf.\ \cite[Proposition 1]{DJL09}):
for every $\epsilon>0$
and $u\in S_X$ there exists $\delta>0$ such that for all
$x\in B(\xb,\delta)$ and all $v\ne 0$ so that
$x+v\in B(\xb,\delta)$ and $v/\|v\|\in B(u,\delta)$, one has
\begin{equation}\label{technic2}
f^\uparrow(x;v)\le f(x+v)-f(x)+\eps\|v\|.
\end{equation}
Without loss of generality, we may assume that the given $u$ belongs to $S_X$.
Let $\epsilon>0$ and $\delta>0$ such that \eqref{technic2} holds.
Fix $0<t<\delta$ and consider any $x\in B(\xb,t\delta/2)$.
Let $v=tu+\xb-x$. Then $x+v\in B(\xb,\delta)$
and $v=t(u+(\xb-x)/t\in {]}0,\delta B(u,\delta){[}$.
It follows from \eqref{technic2} that for any such $x$,
\begin{equation*}
f^\uparrow(x;tu+\xb-x)\le f(\xb+tu)-f(x)+\eps\|tu+\xb-x\|.
\end{equation*}
Hence, for every $0<t<\delta$, since $f$ is \lsc\ at $\xb$,
\begin{equation*}
\limsup_{x\to\xb}f^\uparrow(x;tu+\xb-x)\le f(\xb+tu)-f(\xb)+t\eps.
\end{equation*}
So, for every $\eps>0$,
\begin{equation*}
\liminf_{t\searrow 0}
\limsup_{x\to\xb}f^\uparrow(x;\frac{tu+\xb-x}{t})\le
\liminf_{t\searrow 0}\frac{f(\xb+tu)-f(\xb)}{t}+\eps.
\end{equation*}
Consequently,
\begin{equation*}
\inf_{\alpha\ge 0}
\limsup_{x\to\xb}f^\uparrow(x;u+\alpha(\xb-x))\le
f^r(\xb;u).
\end{equation*}
Hence \eqref{semismooth} holds since we can replace $f^\uparrow$
by $f^r$ in the left hand side according to Theorem \ref{formula}.

\ps
(d)
By \cite[Corollary 6.3]{CJ89},
a locally Lipschitz function $f$ is semismooth if and only if
$$f^r(\xb;u)=\lim_{x\to_u \xb} f^r(x; u).$$
Hence (\ref{dsemismooth}Lip) holds.
\end{proof}

Besides the examples given in Proposition \ref{exsemismooth},
more elaborated classes of functions have been considered
to deal with the subdifferential determination property.
Classes of functions and results based on measure and integration theories
(e.g.\ Borwein-Moors \cite{BM98a} or Thibault-Zagrodny \cite{TZ10})
are discussed elsewhere.
Here, we discuss further the class of functions introduced by 
L. Thibault and D. Zagrodny in \cite{TZ05}:
given a subdifferential $\partial$, a lsc function
$g:X\to\xR$ is called \textit{$\del$-subdifferentially and directionally stable}
(\textit{sds} for short) on $\Omega$
provided that for every $u \in \Omega \cap\dom \del g$ and $v\in \Omega \cap\dom g$
with $v\ne u$, the following properties hold:
\begin{itemize}\itemsep-1ex\topsep0pt
\item[(i)] the function $t\mapsto \gamma (t):= g(u+t(v-u))$ is finite and
continuous on $[0, 1]$;
\item[(ii)] for any $t\in [0, 1{[}$, the right derivative
$\gamma_+'(t)$ exists and is less than $+\infty$;
\item[(iii)] for each fixed $y\in [u, v{[}$ and for each real number $\eps > 0$, there exists some
$r_0 \in {]}0, 1{[}$ such that for any $w=y+r(v-u)$ with
$r \in {]}0, r_0 ]$ and for every
$(x_n,x^*_n) \in \del g$ with $x_n \to x_0 \in [y, w{[}$ one has
\begin{equation}\label{sds}
\limsup_{n\to\infty}\, \la x^*_n,w-x_n\ra \le g^r(y;w-x_0)+\eps\|w-x_0\|.
\end{equation}
\end{itemize}

\begin{proposition}[sds implies strictly upper semi-smooth]\label{sds-uss}
Let $\Omega\subset X$ a nonempty open convex subset of a Banach space $X$
and let $f:X\to\Rex$ be a lsc function with $\Omega\cap \dom f\ne\emptyset$.
If the function $f:X\to\Rex$ is sds on $\Omega$, then
for every $\xb\in \Omega\cap \dom  \del f$, every $u\in X$ with
$\xb+u\in \Omega\cap \dom f$ and every $t\in [0,1{[}$,
the function $f$ is strictly upper semismooth at $\xb+tu$ in the direction $u$
and its radial subderivative $f^r(\xb+tu;u)$ is finite .
\end{proposition}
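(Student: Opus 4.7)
The plan is to set $y_0 := \xb + tu$ and treat the two assertions in turn. Along the segment $s \mapsto \xb + su$, the radial subderivative $f^r(y_0;u)$ coincides with the right derivative $\gamma_+'(t)$ of $\gamma(s) := f(\xb + su)$; by sds property~(ii) this right derivative exists as a finite real number, establishing the finiteness claim.

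For strict upper semismoothness, the first move is to apply Theorem~\ref{formula} with $v = 0$ in order to replace $f^r$ inside the limsup by the support function $f^\del$ of the subdifferential, reducing the problem to showing $\inf_{\alpha\ge 0}\limsup_{x\to y_0} f^\del(x;u+\alpha(y_0-x)) \le f^r(y_0;u)$. The argument then rests on the algebraic identity
\[
u + \alpha(y_0 - x) = \alpha(w - x) \quad\text{for every } x\in X,
\]
valid whenever $\alpha > 0$ and $w := y_0 + (1/\alpha)u$. This identity is the conceptual bridge between the direction $u + \alpha(y_0 - x)$ that appears in the definition of strict upper semismoothness and the ``target direction'' $w - x_n$ controlled by condition~(iii) of the sds property.

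Concretely, given $\eps > 0$, sds~(iii) applied at $y = y_0$ delivers some $r_0 \in {]}0,1{[}$. I pick $\alpha$ large enough that $r := 1/\alpha \le \min(r_0,\, 1-t)$, so that $w = y_0 + ru$ lies in $[\xb,\,\xb + u] \subset \Omega$. Since $y_0 \in [y_0, w{[}$, sds~(iii) applies to every sequence $(x_n, x_n^*) \in \del f$ with $x_n \to y_0$, yielding
\[
\limsup_{n\to\infty} \la x_n^*, w - x_n\ra \;\le\; r\, f^r(y_0; u) + \eps r \|u\|
\]
by positive homogeneity of $f^r(y_0;\cdot)$. Multiplying by $\alpha$ and using the identity gives $\limsup_n \la x_n^*, u + \alpha(y_0 - x_n)\ra \le f^r(y_0;u) + \eps \|u\|$ for every such sequence. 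A standard near-supremum extraction argument then upgrades this bound to $\limsup_{x \to y_0} f^\del(x; u + \alpha(y_0 - x)) \le f^r(y_0;u) + \eps \|u\|$; taking $\inf_{\alpha \ge 0}$, letting $\eps \searrow 0$, and invoking Theorem~\ref{formula} a second time completes the proof.

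The main obstacle, and the conceptual crux, is spotting the reparametrization $u + \alpha(y_0 - x) = \alpha(w - x)$ with $w = y_0 + u/\alpha$. Condition~(iii) of the sds property handles directions pointing from nearby points toward a fixed target $w$ close to $y_0$, whereas strict upper semismoothness is phrased in terms of variable directions involving $y_0 - x$; matching the two families forces this specific choice of $w$, after which the parameter $\alpha$ in the definition of strict upper semismoothness is fed exactly by the parameter $r$ in the sds condition. The remaining ingredients, namely the near-supremum extraction (which incidentally rules out the nuisance possibility $f^\del = +\infty$ along the sequence, since such a pathology would contradict the displayed upper bound) and the bridging role of Theorem~\ref{formula}, are routine.
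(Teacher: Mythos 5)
Your argument for strict upper semismoothness is essentially the paper's own: you apply sds~(iii) at $y=\xb_t$, use the reparametrization $u+\alpha(\xb_t-x)=\alpha(w-x)$ with $w=\xb_t+u/\alpha$ (the paper writes this as $w=x_0+r_1u$ and divides by $r_1$), pass from subgradient sequences to $\limsup_{x\to \xb_t} f^\del(x;u+\alpha(\xb_t-x))$, and invoke Theorem~\ref{formula} with $v=0$ to trade $f^\del$ for $f^r$. That half is correct.

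The gap is in the finiteness claim. Condition~(ii) of the sds definition says the right derivative $\gamma_+'(t)$ \emph{exists and is less than $+\infty$}; it does not say it is a real number, so the value $\gamma_+'(t)=f^r(\xb_t;u)=-\infty$ is not excluded by (ii) alone (if finiteness were part of the existence requirement, the clause ``less than $+\infty$'' would be vacuous). Your one-line deduction ``by sds~(ii) this right derivative exists as a finite real number'' therefore does not establish $f^r(\xb_t;u)>-\infty$, and nothing else in your proof does. The paper closes exactly this case: it derives the inequality $f^r(x_0;u)\le f^r(\xb_t;u)+\eps\|u\|$ not only at $x_0=\xb_t$ but at \emph{every} $x_0\in[\xb_t,w{[}$ (your same reparametrization, with $w=x_0+r_1u$, $r_1>0$ varying with $x_0$), and then applies the mean value inequality (Proposition~\ref{mvi}) to $s\mapsto f(\xb_t+s(w-\xb_t))$ — finite at the endpoints by sds~(i) — to produce some $x_0\in[\xb_t,w{[}$ with $f^r(x_0;u)>-\infty$, whence $f^r(\xb_t;u)>-\infty$; combined with (ii) this gives finiteness. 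Since you only prove the key inequality at $x_0=\xb_t$, you cannot run this step as written; the fix is to state and prove it for all $x_0\in[\xb_t,w{[}$ (which your computation gives with no extra ideas, using Theorem~\ref{dense} to guarantee such sequences exist) and then add the Proposition~\ref{mvi} argument.
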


\begin{proof}
Let $f:X\to\Rex$ be sds on $\Omega$.
Let $\xb\in\Omega\cap\dom \del f$, $u\in X$ with $u\ne 0$,
such that $\xb+u\in \Omega\cap\dom f$.
We apply the above definition of a sds function with $u$ and $v$
respectively replaced by $\xb$ and $\xb+u$.
Let $\eps>0$ and let $\xb_t:=\xb+tu\in [\xb,\xb+u{[}$.
By condition (iii) of the definition, there exists
$r_0 \in {]}0, 1{[}$ such that for any $w=\xb_t+ru$ with
$r \in {]}0, r_0 ]$ and for every $(x_n,x^*_n) \in \del f$
with $x_n \to x_0 \in [\xb_t, w{[}$ one has
$$
\limsup_{n\to\infty}\, \la x^*_n,w-x_n\ra \le f^r(\xb_t;w-x_0)+
\eps\|w-x_0\|.
$$
Since subdifferentials are densely defined (Theorem \ref{dense}),
for every $x_0 \in [\xb_t, w{[}$ there does exist a sequence
$(x_n,x^*_n) \in \del f$ with $x_n \to x_0$. Given $x_0 \in [\xb_t, w{[}$,
write $w$ as $w=x_0+r_1 u$ with $r_1>0$ so that the above relation
can be written as
$$
\limsup_{n\to\infty}\, \la x^*_n,r_1 u+x_0-x_n\ra \le f^r(\xb_t;r_1u)+\eps\|r_1u\|.
$$
Dividing by $r_1$ we get that for every $x_0\in [\xb_t, w{[}$
and every $(x_n,x^*_n) \in \del f$ with $x_n \to x_0$,
\begin{equation*}
\limsup_{n\to\infty}\, \la x^*_n,u+\frac{x_0-x_n}{r_1       }\ra \le
f^r(\xb_t;u)+\eps\|u\|,
\end{equation*}
hence, for every $x_0\in [\xb_t, w{[}$,
\begin{equation*}
\inf_{\alpha\ge 0}\limsup_{x\to x_0} f^\del (x; u +\alpha (x_0-x)\ra\le f^r(\xb_t;u)+\eps\|u\|.
\end{equation*}
Invoking Theorem \ref{formula}, we conclude that for every $x_0\in [\xb_t, w{[}$, it holds
\begin{equation}\label{fin}
f^r(x_0;u)\le \inf_{\alpha\ge 0}\limsup_{x\to x_0} f^r (x; u +\alpha (x_0-x)\ra
\le f^r(\xb_t;u)+\eps\|u\|.
\end{equation}
The mean value inequality (Proposition \ref{mvi}) produces a point
$x_0\in [\xb_t, w{[}$ such that $f^r(x_0;u)>-\infty$, hence also
$f^r(\xb_t;u)>-\infty$ in view of \eqref{fin}.
Combining this with condition (ii) of the definition of sds, we see that
$f^r(\xb_t;u)$ is finite.
Next, considering \eqref{fin} with $x_0=\xb_t$ and noting that $\eps>0$
was arbitrary, we see that \eqref{semismooth} holds with $\xb=\xb_t$,
that is, $f$ is strictly upper semismooth at $\xb_t$ in the direction $u$.
\end{proof}

The next theorem states a key property of (strictly)
upper semismooth functions: roughly, a lsc function is (strictly)
upper semismooth at some point if and only if
its radial subderivative at this point can be recovered
from the values of the subdifferential
of the function at directional limiting points (at limiting points).

\begin{theorem}[Recovering the radial subderivative from a
subdifferential]\label{semismoothsubdiff}
Let $X$ be a Banach space,
$f:X\to\xR$ be lsc, $\xb\in\dom f$ and $u\in X$, $u\ne 0$.
Let also $\del$ be an arbitrary subdifferential.

{\rm (a)}
Assume $f$ is radially accessible at $\xb$ from $u$.
Then $f$ is upper semismooth at $\xb$ in the direction $u$ if and only if
\begin{align}\label{semismoothsubdiff1}
f^r(\xb;u)=\inf_{\alpha\ge 0}\limsup_{x\to_u \xb}
f^\del (x:u+\alpha (\xb-x)).
\end{align}

{\rm (b)} $f$ is strictly
upper semismooth at $\xb$ in the direction $u$ if and only if
\begin{align}\label{semismoothsubdiff2}
f^r(\xb;u)=\inf_{\alpha\ge 0}\limsup_{x\to \xb}
f^\del (x:u+\alpha (\xb-x)).
\end{align}
\end{theorem}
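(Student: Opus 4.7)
The plan is to derive both equivalences as essentially direct consequences of the subderivative–subdifferential duality formula (Theorem \ref{formula}), applied with different choices of the convergence direction $v$. The key observation is that Theorem \ref{formula} equates
\[
\limsup_{x\to_v\xb} f^r(x;u+\alpha(\xb-x)) \;=\; \limsup_{x\to_v\xb} f^\del(x;u+\alpha(\xb-x))
\]
for every $\alpha\ge 0$. So one may freely replace $f^r$ by $f^\del$ (or vice versa) inside any such limsup without altering its value, and in particular this persists after taking the infimum over $\alpha\ge 0$.

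For part (b), I would first note that the defining inequality for strict upper semismoothness is in fact an equality: for every $\alpha\ge 0$, taking $x=\xb$ in the limsup gives $\limsup_{x\to\xb}f^r(x;u+\alpha(\xb-x))\ge f^r(\xb;u)$, and this passes to the infimum. Hence $f$ is strictly upper semismooth at $\xb$ in the direction $u$ if and only if
\[
f^r(\xb;u)\;=\;\inf_{\alpha\ge 0}\limsup_{x\to\xb} f^r(x;u+\alpha(\xb-x)).
\]
Applying Theorem \ref{formula} with $v=0$ (so that $x\to_v\xb$ is just $x\to\xb$) converts the right-hand side to the one in \eqref{semismoothsubdiff2}, giving the stated equivalence.

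For part (a), I would argue along the same lines but with $v=u$. Under the radial accessibility assumption, Proposition \ref{devdir} yields $f^r(\xb;u)\le f^r_+(\xb;u)\le \inf_{\alpha\ge 0}\limsup_{x\to_u\xb} f^r(x;u+\alpha(\xb-x))$, so here the reverse inequality is automatic (this is exactly what Remark \ref{remdefdsemismooth}(b) records). Consequently, radial accessibility plus upper semismoothness is equivalent to the equality
\[
f^r(\xb;u)\;=\;\inf_{\alpha\ge 0}\limsup_{x\to_u\xb} f^r(x;u+\alpha(\xb-x)).
\]
Now Theorem \ref{formula} with $v=u$ transforms the right-hand side into the subdifferential expression appearing in \eqref{semismoothsubdiff1}, which completes the proof.

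There is no real obstacle in this argument since all the heavy lifting has been carried out in the duality formula (Theorem \ref{formula}) and in the radial stability result (Proposition \ref{devdir}); the only point requiring a bit of care is being consistent about the choice of the direction of convergence $v$ when invoking Theorem \ref{formula} ($v=0$ for the strict version, $v=u$ for the directional version), and remembering that radial accessibility is precisely what makes the ``$\ge$'' direction automatic in part (a) but not needed for part (b) where the trivial choice $x=\xb$ already supplies it.
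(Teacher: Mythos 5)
Your proof is correct and follows essentially the same route as the paper: part (b) reduces to the observation that equality always holds in \eqref{semismooth} and then Theorem \ref{formula} with $v=0$, while part (a) uses radial accessibility via Proposition \ref{devdir} (i.e., Remark \ref{remdefdsemismooth}(b)) to upgrade \eqref{dsemismooth} to an equality and then Theorem \ref{formula} with $v=u$. Nothing is missing; you have merely spelled out the justifications the paper cites implicitly.
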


\begin{proof}
(a) By Theorem \ref{formula} with $v=u$,
the formulas (\ref{semismoothsubdiff1}) and (\ref{dsemismooth}bis) are the same.
\smallbreak
(b) We observed that equality holds in (\ref{semismooth});
this equality and \eqref{semismoothsubdiff2} are the same according to
Theorem \ref{formula} with $v=0$.
\end{proof}


\section{Subdifferential determination property}\label{determinationsect}

The two theorems of this section assert, with slightly different
assumptions, that the upper semismooth functions have the
subdifferential determination property.

\begin{theorem} [Subdifferential determination property]
\label{determination}
Let $X$ be a Banach space and $\Omega\subset X$ be a nonempty open convex subset.
Let $f,g:X\to\xR$ be lsc with $\Omega\cap \dom f\ne\emptyset$.
Assume that for every $\xb\in \Omega\cap \dom  \del f\cap \dom  \del g$ and
every $u\in X$, $u\ne 0$, with $\xb+u\in \Omega\cap \dom f\cap \dom g$,
the points $\xb_t:=\xb+tu$ satisfy the following properties:
\smallbreak
{\rm(\ref{determination}.1)}
$t\mapsto f(\xb_t)$ and $t\mapsto g(\xb_t)$ are continuous on $[0,1]$;
\smallbreak
{\rm(\ref{determination}.2)} there is a countable subset $C\subset [0,1]$ such
that for every $t\in [0,1]\setminus C$,
 either $f^r_+(\xb_t;u)$ or $g^r(\xb_t;u)$ is finite, and
$g$ is upper semismooth at $\xb_t$ in the direction $u$.
\smallbreak\noindent
Then,
\begin{equation}\label{impli}
\del f(x)\subset \del g(x) \mbox{ for all } x\in\Omega\cap \dom f
\Longrightarrow f=g + {\rm const}  \text{ on  }\Omega\cap \dom f.
\end{equation}
\end{theorem}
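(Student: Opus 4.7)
The plan is to apply the continuous mean value theorem (Proposition \ref{recov-subdiv-basic-continuous}) along every admissible segment in $\Omega$, using the duality formula (Theorem \ref{formula}) together with the upper semismoothness of $g$ to transport the subdifferential inclusion $\del f\subset\del g$ into a pointwise comparison of radial subderivatives.

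Fix $\bar x\in\Omega\cap\dom\del f\cap\dom\del g$ and $u\ne 0$ with $\bar x+u\in\Omega\cap\dom f\cap\dom g$, and set $\varphi(t):=f(\bar x+tu)$, $\gamma(t):=g(\bar x+tu)$. By (\ref{determination}.1) both $\varphi,\gamma$ are continuous on $[0,1]$, and unwinding definitions yields $\varphi^r_+(t;+1)=f^r_+(\bar x_t;u)$ and $\gamma^r(t;+1)=g^r(\bar x_t;u)$. The key pointwise inequality I aim to establish is
\begin{equation*}
f^r_+(\bar x_t;u)\le g^r(\bar x_t;u)\qquad\text{for every } t\in[0,1]\setminus C.
\end{equation*}
Once this holds, the finiteness clause of (\ref{determination}.2) supplies a real number $\rho(t)$ wedged between the two sides, Proposition \ref{recov-subdiv-basic-continuous} applies, and we obtain the one-sided comparison $f(\bar x+u)-f(\bar x)\le g(\bar x+u)-g(\bar x)$.

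The pointwise inequality is built as a four-link chain. First, $\del f(x)\subset\del g(x)$ passes to support functions: $f^\del(x;v)\le g^\del(x;v)$ for every $x$ and every direction $v\in X$. Second, Theorem \ref{formula} applied (with $v=u$) to both $f$ and $g$ converts this into
\begin{equation*}
\inf_{\alpha\ge 0}\limsup_{x\to_u\bar x_t} f^r(x;u+\alpha(\bar x_t-x))\le \inf_{\alpha\ge 0}\limsup_{x\to_u\bar x_t} g^r(x;u+\alpha(\bar x_t-x)).
\end{equation*}
Third, the continuity in (\ref{determination}.1) makes $f$ radially accessible at $\bar x_t$ from $u$, so Proposition \ref{devdir} bounds $f^r_+(\bar x_t;u)$ above by the left-hand side. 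Fourth, upper semismoothness of $g$ at $\bar x_t$ in direction $u$, which is part of hypothesis (\ref{determination}.2), bounds the right-hand side above by $g^r(\bar x_t;u)$ via Definition \ref{defdsemismooth}. Concatenating these four links delivers the displayed inequality.

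What remains is to upgrade the one-sided estimate to equality and then to glue local equalities into a single constant. The natural move is to replay the argument along the reversed segment, starting from $\bar x+u$ in direction $-u$, which yields $g(\bar x+u)-g(\bar x)\le f(\bar x+u)-f(\bar x)$ provided the endpoint $\bar x+u$ also lies in $\dom\del f\cap\dom\del g$, and hence equality of increments on that diagonal subset of pairs. The main obstacle I anticipate lies precisely in the asymmetry of hypothesis (\ref{determination}.2): only the starting point is required to be in $\dom\del f\cap\dom\del g$, so for a generic endpoint in $\dom f\cap\dom g$ the reverse application is not directly licensed, and an approximation argument based on the density result (Theorem \ref{dense}) together with the continuity of $t\mapsto f(\bar x_t)$ and $t\mapsto g(\bar x_t)$ is needed to close the gap. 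Once equality of increments is available on a dense family of segments, the difference $f-g$ is locally constant on $\Omega\cap\dom f$; convexity of $\Omega$ then propagates the local constant to a single additive constant, establishing the implication \eqref{impli}.
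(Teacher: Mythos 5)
Your first step is sound and coincides with the paper's own argument: from $\del f\subset\del g$ you pass to support functions, use Theorem \ref{formula} (with $v=u$), Proposition \ref{devdir} (radial accessibility being guaranteed by (\ref{determination}.1)) and the upper semismoothness of $g$ to get $f^r_+(\xb_t;u)\le g^r(\xb_t;u)$ off the countable set $C$, and then Proposition \ref{recov-subdiv-basic-continuous} gives the one-sided increment inequality $f(\xb+u)-f(\xb)\le g(\xb+u)-g(\xb)$ for $\xb\in\Omega\cap\dom\del f$ and $\xb+u\in\Omega\cap\dom f\cap\dom g$. Up to there you are reproducing the paper's first step.

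The gap is in your endgame. The mechanism you propose --- equality of increments on a ``dense family of segments'', hence $f-g$ ``locally constant'' on $\Omega\cap\dom f$, with convexity of $\Omega$ propagating the constant --- does not work as stated: $\Omega\cap\dom f$ need not be open, connected, or even have any interior, and $f-g$ need not be continuous on it, so neither a local-to-global connectedness argument nor density of segments yields a single additive constant. Moreover, you never establish that $g$ is finite on $\Omega\cap\dom f$, which the conclusion $f=g+{\rm const}$ on $\Omega\cap\dom f$ tacitly requires, and your ``reverse the segment'' step is, as you yourself note, not licensed at a generic endpoint $\yb\in\dom f\cap\dom g$; the approximation you defer to is precisely the missing argument. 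The paper closes this differently and more simply: keep the \emph{one-sided} inequality, and for an arbitrary base point $\xb\in\Omega\cap\dom f$ use Theorem \ref{dense} to produce $\xb_n\in\dom\del f$ with $\xb_n\to\xb$ and, crucially, $f(\xb_n)\to f(\xb)$; applying the step-one inequality at $\xb_n$ and using only the lower semicontinuity of $g$ at $\xb$ (not continuity along segments) gives $f(\yb)-f(\xb)\le g(\yb)-g(\xb)$ for all $\xb\in\Omega\cap\dom f$ and $\yb\in\Omega\cap\dom f\cap\dom g$. This inequality forces $g(\xb)<+\infty$, so $\Omega\cap\dom f=\Omega\cap\dom f\cap\dom g$, and then no reversed-segment run is needed at all: the inequality now holds for every ordered pair of points of $\Omega\cap\dom f$, and interchanging $\xb$ and $\yb$ in it yields equality of increments for all pairs, hence a single constant, even when $\Omega\cap\dom f$ is disconnected. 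You should replace your gluing step by this two-point argument (or carry out your deferred approximation in exactly this form) for the proof to be complete.
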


\begin{proof}
Assume
\begin{equation}\label{final0}
\del f(x)\subset \del g(x) \quad \mbox{for all } x\in\Omega\cap \dom \del f.
\end{equation}
The beginning of the proof is the same as in
Thibault-Zagrodny \cite[Theorem 3.21]{TZ10}.
We may suppose that $\Omega\cap \dom f$ is not a singleton because otherwise the result is obvious.
Then, if $\Omega\cap \dom f$ contains two distinct points $x,y$,
the set $\Omega\cap \dom \del f$ also contains two distinct points $\xb,\yb$ by the
density of $\dom\del f$ into $\dom f$ (Theorem \ref{dense}).
From \eqref{final0} it follows that $\xb$ and $\yb$ also belong to
$\dom \del g\subset \dom g$.
\ps
\textit{First step.} 
Fix $\xb\in\Omega\cap \dom \del f=\Omega\cap \dom  \del f\cap \dom  \del g$
and $\yb\in \Omega\cap \dom f\cap\dom g$, with $\xb\ne\yb$.
We claim that
\begin{equation}\label{fin1}
f(\yb)-f(\xb)\le g(\yb)-g(\xb).
\end{equation}
Let $u:=\yb-\xb$, hence $\xb_t=\xb+tu=\xb+t(\yb-\xb)$,
and let $t\in [0,1]\setminus C$.
By Assumption (\ref{determination}.1),
$f$ is radially accessible at $\xb_t$ from $u$, so in view of
Proposition \ref{devdir} and Theorem \ref{formula}
\begin{equation}\label{fin2}
f^r_+(\xb_t;u)\le \inf_{\alpha\ge 0}\limsup_{x\to_u \xb_t}\,
f^\del(x;u+\alpha(\xb_t-x)).
\end{equation}
By Assumption (\ref{determination}.2),
$g$ is upper semismooth at $\xb_t$ in the direction $u$,
hence according to Definition \ref{defdsemismooth} and Theorem \ref{formula}
\begin{equation}\label{fin3}
\inf_{\alpha\ge 0}\limsup_{x\to_u \xb_t}\,g^\del (x;u+\alpha(\xb_t-x))\le g^r(\xb_t;u).
\end{equation}
From \eqref{final0}, the right-hand side of \eqref{fin2} is less than
or equal to the left-hand side of \eqref{fin3}. We therefore conclude that
\begin{equation}\label{fin4}
f^r_+(\xb_t;u)\le g^r(\xb_t;u) \text{ for all } t\in [0,1]\setminus C.
\end{equation}

Now, consider the functions $\varphi:t\in [0,1]\mapsto f(\xb_t)$ and
$\gamma:t\in [0,1]\mapsto g(\xb_t)$. By Assumption (\ref{determination}.1),
they are finite and continuous on $[0,1]$
and, by Assumption (\ref{determination}.2), for every $t\in [0,1]\setminus C$,
either $\varphi^r_+(t;+1)$ or $\gamma^r(t;+1)$ is finite.
On the other hand, \eqref{fin4} can be reformulated as
\begin{equation}\label{fin4b}
\varphi^r_+(t;+1)\le \gamma^r(t;+1) \text{ for all } t\in [0,1]\setminus C.
\end{equation}
So we may invoke 
Proposition \ref{recov-subdiv-basic-continuous} to derive
that $\varphi(1)-\varphi(0)\le \gamma(1)-\gamma(0)$, that is,
\eqref{fin1} holds.
\ps
\textit{Second step.} 
In the first step, we have shown that \eqref{fin1}
holds for every point $\xb\in\Omega\cap \dom \del f$
and $\yb\in \Omega\cap \dom f\cap\dom g$.
Now, let $\xb\in\Omega\cap\dom f$ and $\yb\in \Omega\cap \dom f\cap\dom g$.
Applying the density Theorem \ref{dense}, we find
a sequence $(\xb_n)_n$ in $\dom \del f$ such that $\xb_n\to \xb$ and $f(\xb_n)\to f(\xb)$.
By \eqref{fin1}, for every $n\in \N$ and $\yb\in \Omega\cap \dom f\cap\dom g$ it holds
$$f(\yb)-f(\xb_n)\le g(\yb)-g(\xb_n).$$
Because $f(\xb_n)\to f(\xb)$ and $g$ is lower semicontinuous at $\xb$,
passing to the limit we get
$$f(\yb)-f(\xb)\le g(\yb)-g(\xb).$$
This inequality shows that $g(\xb)$ is finite whenever $\xb\in\Omega\cap\dom f$,
that is, $\Omega\cap\dom f= \Omega\cap\dom f\cap\dom g$.
So finally one has
$$
f(\yb)-f(\xb)\le g(\yb)-g(\xb) \text{ for all } \xb,\yb \in \Omega\cap\dom f.
$$
Then, interchanging the role of $\xb$ and $\yb$, we derive that in fact
$$
f(\yb)-f(\xb)= g(\yb)-g(\xb) \text{ for all } \xb,\yb \in \Omega\cap\dom f,
$$
which means that $f=g + {\rm const}$  on $\Omega\cap \dom f$.
The proof is complete.
\end{proof}

If instead of the continuous version of the mean value theorem
(Proposition \ref{recov-subdiv-basic-continuous})
we use the semicontinuous version
(Proposition \ref{recov-subdiv-basic}),
we obtain a variant of the above theorem with
weaker assumptions on $f$ but stronger on $g$. The proof being
similar will not be repeated.

\begin{theorem} [Subdifferential determination property: semicontinuous variant]
\label{determination2}
Let $X$ be a Banach space and $\Omega\subset X$ be a nonempty open convex subset.
Let $f,g:X\to\xR$ be lsc with $\Omega\cap \dom f\ne\emptyset$.
Assume that for every $\xb\in \Omega\cap \dom  \del f\cap \dom  \del g$ and
every $u\in X$, $u\ne 0$, with $\xb+u\in \Omega\cap \dom f\cap \dom g$,
the points $\xb_t:=\xb+tu$ satisfy the following properties:
\smallbreak
{\rm(\ref{determination2}.1)}
$t\mapsto g(\xb_t)$ is continuous on $[0,1]$;
\smallbreak
{\rm(\ref{determination2}.2)} 
for every $t\in [0,1{[}$, either $f^r(\xb_t;u)$ or $g^r(\xb_t;u)$ is finite, and
either {\rm(a)} or {\rm(b)} holds:
\\\hspace*{0.7cm}
{\rm(a)} $f$ is radially accessible at $\xb_t$ from $u$
and $g$ is upper semismooth at $\xb_t$ in direction $u$,
\\\hspace*{0.7cm}
{\rm(b)} $g$ is strictly upper semismooth at $\xb_t$ in direction $u$.
\smallbreak\noindent
Then,
\begin{equation*}
\del f(x)\subset \del g(x) \mbox{ for all } x\in\Omega\cap \dom f
\Longrightarrow f=g + {\rm const}  \text{ on  }\Omega\cap \dom f.
\end{equation*}
\end{theorem}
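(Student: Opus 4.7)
The plan is to mirror the two-step architecture of the proof of Theorem \ref{determination}: first establish the slice inequality $f(\yb)-f(\xb)\le g(\yb)-g(\xb)$ for $\xb\in\Omega\cap\dom\del f$ and $\yb\in\Omega\cap\dom f\cap\dom g$, then extend by density (Theorem \ref{dense}) to all $\xb\in\Omega\cap\dom f$ and promote the resulting inequality to equality by symmetry. The only substitution needed is to invoke the semicontinuous mean value theorem (Proposition \ref{recov-subdiv-basic}) in place of the continuous one: this is precisely why hypothesis (\ref{determination2}.1) asks continuity only of $t\mapsto g(\xb_t)$, while lower semicontinuity of $t\mapsto f(\xb_t)$ comes for free from $f$ being lsc.

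For the first step, fix $\xb\in\Omega\cap\dom\del f$ and $\yb\in\Omega\cap\dom f\cap\dom g$ with $\yb\ne\xb$; set $u:=\yb-\xb$, $\xb_t:=\xb+tu$, $\varphi(t):=f(\xb_t)$, $\gamma(t):=g(\xb_t)$. The core task is to show
\begin{equation*}
f^r(\xb_t;u)\le g^r(\xb_t;u)\qquad\text{for every } t\in[0,1[,
\end{equation*}
and hypothesis (\ref{determination2}.2) offers two routes. In case (a), radial accessibility of $f$ at $\xb_t$ from $u$ lets Proposition \ref{devdir} together with Theorem \ref{formula} yield
\begin{equation*}
f^r(\xb_t;u)\le f^r_+(\xb_t;u)\le \inf_{\alpha\ge 0}\limsup_{x\to_u\xb_t} f^\del(x;u+\alpha(\xb_t-x)),
\end{equation*}
while upper semismoothness of $g$ at $\xb_t$ in direction $u$ gives, via Definition \ref{defdsemismooth} and Theorem \ref{formula},
\begin{equation*}
\inf_{\alpha\ge 0}\limsup_{x\to_u\xb_t} g^\del(x;u+\alpha(\xb_t-x))\le g^r(\xb_t;u).
\end{equation*}
Since $\del f\subset\del g$ implies $f^\del\le g^\del$ pointwise, concatenating the two displays closes the case. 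In case (b), radial accessibility of $f$ is unnecessary: the constant path $x=\xb_t$ inside the ``unrestricted'' limsup automatically gives $f^r(\xb_t;u)\le \limsup_{x\to\xb_t} f^r(x;u+\alpha(\xb_t-x))$ for every $\alpha\ge 0$, and combining Theorem \ref{formula} with $v=0$ and Theorem \ref{semismoothsubdiff}(b) applied to $g$ closes the argument identically.

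To invoke Proposition \ref{recov-subdiv-basic}, one needs a real-valued selector $\rho(t)$ wedged between $\varphi^r(t;+1)$ and $\gamma^r(t;+1)$ at every $t\in[0,1[$. Hypothesis (\ref{determination2}.2) guarantees finiteness of at least one of $f^r(\xb_t;u),g^r(\xb_t;u)$; coupled with the inequality just proved, this forces both $\varphi^r(t;+1)<+\infty$ and $\gamma^r(t;+1)>-\infty$, and whichever of the two values is finite serves as $\rho(t)$. Applying Proposition \ref{recov-subdiv-basic} to $\varphi$ (lsc on $[0,1]$, finite at $0$) and $\gamma$ (continuous hence usc on $[0,1]$, finite at $0$) delivers $\varphi(1)-\varphi(0)\le \gamma(1)-\gamma(0)$, i.e.\ $f(\yb)-f(\xb)\le g(\yb)-g(\xb)$. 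The second step of Theorem \ref{determination}'s proof --- extending $\xb$ from $\dom\del f$ to $\dom f$ via Theorem \ref{dense} and lower semicontinuity of $g$, then swapping $\xb$ and $\yb$ to upgrade inequality to equality --- then transfers verbatim. The main delicacy is case (b): one must recognize that \emph{strict} upper semismoothness is precisely the ingredient compensating for the absence of radial accessibility of $f$, because the limsup over all $x\to\xb_t$ (rather than directionally along $u$) automatically contains the trivial path $x=\xb_t$, a lower bound unavailable for $\limsup_{x\to_u\xb_t}$ unless $f$ is radially accessible.
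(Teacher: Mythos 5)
Your proposal is correct and is essentially the proof the paper intends: the paper omits it, saying only that one repeats the argument of Theorem \ref{determination} with the semicontinuous mean value theorem (Proposition \ref{recov-subdiv-basic}) in place of the continuous one, which is exactly what you carry out, including the correct treatment of case (b) via the always-valid reverse inequality in \eqref{semismooth} (equivalently Theorem \ref{semismoothsubdiff}(b)) and the selection of the finite value among $f^r(\xb_t;u)$, $g^r(\xb_t;u)$ as the wedge $\rho(t)$.
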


\begin{remark}
(a) Theorem \ref{determination} is new in the context of mean-valued based theorems.
Its assumption on $g$ is much weaker than the one in Theorem \ref{determination2}.
It should rather be compared and contrasted with integration-based results
such as those in \cite{BM98a,TZ10}. Since the technique and concepts are
totally different, this will be done in a separate paper.
\smallbreak
(b) Theorem \ref{determination2} unifies several results.
As an illustration, we mention three of them, which cannot
be derived from each other but which are all special cases
of Theorem \ref{determination2}, since the functions they involve
are either upper semismooth (case b1) or strictly upper semismooth
(cases b2 and b3).
\\\hspace*{0.7cm}
(b1) Correa-Jofr\'e \cite[Proposition 5.4]{CJ89}, where
$g$ is locally Lipschitz, semismooth and whose Clarke subdifferential
is single-valued at any point of a dense subset of $X$;
\\\hspace*{0.7cm}
(b2) Thibault-Zlateva \cite[Theorem 3.3]{TZl05}, where
lsc regular functions $g$ which are
continuous relative to their domains and strictly directionally Lipschitz
are shown to have a ''local subdifferential determination property'';
\\\hspace*{0.7cm}
(b3) Thibault-Zagrodny \cite[Theorem 4.1]{TZ05}
(in the special case $\gamma=0$), where $g$ is sds.
\end{remark}

{\small

}
\end{document}